\newtheorem{theorem}{Theorem}[section]
\newtheorem{lemma}{Lemma}[section]
\newtheorem{example}{Example}[section]
\numberwithin{equation}{section}
\def \d {\mathrm{d}}
\newcommand{\bb}{\boldsymbol}
\begin{document}

\begin{frontmatter}
	\title{Optimal error analysis of an interior penalty virtual element method for fourth-order singular perturbation problems}
		
    \author[1]{Fang Feng}
	\ead{ffeng@njust.edu.cn}
    \author[1]{Yuanyi Sun}
    \address[1]{School of Mathematics and Statistics, Nanjing University of Science and Technology, Nanjing, Jiangsu 210094, China}

	\author[2,3,4]{Yue Yu}
	\ead{terenceyuyue@xtu.edu.cn}
\address[2]{School of Mathematics and Computational Science, Xiangtan University, Xiangtan, Hunan 411105, China}
\address[3]{Hunan Research Center of the Basic Discipline Fundamental Algorithmic Theory and Novel Computational Methods, Xiangtan, Hunan 411105, China}
\address[4]{National Center for Applied Mathematics in Hunan, Xiangtan, Hunan 411105, China}

\footnotetext{The authors are listed in alphabetical order. All authors contributed equally to this paper.}
	
\begin{abstract}
In recent studies \cite{ZZ24, FY24}, the Interior Penalty Virtual Element Method (IPVEM) has been developed for solving a fourth-order singular perturbation problem, with uniform convergence established in the lowest-order case concerning the perturbation parameter. However, the resulting uniform convergence rate is only of half-order, which is suboptimal. In this work, we demonstrate that the proposed IPVEM in fact achieves optimal and uniform error estimates, even in the presence of boundary layers. The theoretical results are substantiated through serveral numerical experiments, which confirm the validity of the error estimates and highlight the method's effectiveness for singularly perturbed problems.
\end{abstract}

\begin{keyword}
Interior penalty virtual element method; Fourth-order singular perturbation problem; Optimal convergence rate
\end{keyword}		

\end{frontmatter}

\section{Introduction}\label{Sec1}

Let $\Omega$ be a bounded  polygonal domain of $\mathbb{R}^2$ with boundary $\partial \Omega$.  For $f\in L^2(\Omega)$, we consider the following boundary value problem of the fourth-order singular perturbation equation:
	\begin{equation}\label{strongform}
	\begin{cases}
			\varepsilon^2\Delta^2u-\Delta u = f & {\rm in}~~\Omega,\\
			u = \dfrac{\partial u}{\partial \bb n} =0& {\rm on}~~\partial \Omega,
	\end{cases}
	\end{equation}
where $\bm{n} = (n_1,n_2)$ is the unit outer normal to $\partial \Omega$, and $\varepsilon$ is a real parameter satisfying $0< \varepsilon\le 1$. In this article, we are primarily concerned with the case where $\varepsilon \to 0$, or the differential equation formally degenerates to the Poisson equation, but with the ``extra'' normal boundary condition, which will produce boundary layers in the first derivatives of $u$. 

{
This paper focuses on the Virtual Element Method (VEM), which extends the standard finite element method to accommodate general polytopal meshes \cite{Beirao-Brezzi-Cangiani-2013, Ahmad-Alsaedi-Brezzi-2013, Beirao-Brezzi-Marini-2014}. Compared to traditional finite element methods, VEMs offer advantages in handling complex geometries and problems with high-regularity solutions \cite{Brezzi-Marini-2013,Zhao-Chen-2016,Zhao-Zhang-Chen-2018,Chen-HuangX-2020}. Moreover, thanks to the great flexibility in mesh choice, VEMs require no special treatment for hanging nodes, which provides a significant advantage in mesh refinement \cite{ChenHuangLin2022avem,Carsten2023Morley}.}
Recently, the interior penalty technique for VEMs has been explored in \cite{ZMZW2023IPVEM} for the biharmonic equation, using the same degrees of freedom (DoFs) as for the $ H^1 $-conforming virtual elements \cite{Beirao-Brezzi-Cangiani-2013}. This numerical scheme can be viewed as a combination of the virtual element space and a discontinuous Galerkin method, as the resulting global discrete space is { $H^1$-nonconforming} and an interior penalty formulation is adopted to enforce the continuity of the solution.
The interior penalty method is then applied to tackle the fourth-order singular perturbation problem in \cite{ZZ24}, with certain adaptations to the original space. These modifications include changes in the definition of the $ H^1 $-type projection and the selection of the DoFs. In contrast to the approach in \cite{ZZ24}, we in \cite{FY24} adopt the original formulation introduced in \cite{ZMZW2023IPVEM} to address the fourth-order singular perturbation problem \eqref{strongform}. To streamline the implementation, we modify the jumps and averages in the penalty term by incorporating the elliptic projector $ \Pi_h^\nabla $. { It is worth noting that although the IPVEM is $H^1$-nonconforming and { thus requires lower regularity assumptions on the solution \cite{ZJZZ24} compared to $C^0$-continuous finite elements}, the a priori error estimates in \cite{ZMZW2023IPVEM,FY24,ZZ24} show that the interior penalty virtual elements behave similarly to $C^0$-continuous elements. This property can be extended to the a posteriori setting, as neither the discrete scheme nor the error estimators require explicit accounting of function jumps \cite{FHYZ2026}.}

Both analyses in \cite{ZZ24, FY24} demonstrate that the IPVEM is robust with respect to the perturbation parameter in the lowest-order case. { Although two methods achieve same uniform convergence rate of $\mathcal{O}(h^{1/2})$ in theory, their numerical performance differs due to the different handling of the consistency term
$\sum_{e \in \mathcal{E}_h} \int_e \frac{\partial u}{\partial \bb{n}_e}[v_h] \d s$. As noted in the introduction of \cite{FY24} and the remark below Theorem 4.5 of \cite{ZZ24}, the nonconformity error is not optimal for the method in \cite{ZZ24}, leading to a suboptimal convergence rate as $\varepsilon$ tends to zero. This is observed in Table 4 of that paper, where a convergence rate of $\mathcal{O}(h^{1.46})$ is reported, which is better than the theoretical prediction $\mathcal{O}(h^{1/2})$ but still falls short of the optimal rate $\mathcal{O}(h^2)$ for small $\varepsilon$.
In contrast, thanks to the vanishing consistency term, our method proposed in \cite{FY24}~--~despite also having a suboptimal error analysis~--~already exhibits optimal rates in numerical experiments. The present paper essentially closes the gap between theory and previously observed numerical results.}

{ Among the discrete methods that achieve optimal and uniform convergence for singularly perturbed problems, Nitsche's technique is commonly employed for parameter-dependent boundary value problems to enhance performance. For an overview of its applications, we refer to the review by \cite{CHH25}. In \cite{zbMATH07958861,Liu2024}, the authors show that the weak Galerkin finite element and discontinuous Galerkin methods can achieve optimal convergence by employing specially designed meshes. Since Nitsche's method requires additional stabilization terms, which may complicate implementation, \cite{HuangX2025} reformulates the problem as a second-order system, which allows the boundary condition $\partial_{\bm{n}} u$ to be naturally and weakly imposed in the distributional mixed formulation. Meanwhile, \cite{CHuangX2025} decouples the fourth-order elliptic singular perturbation problem into a combination of two Poisson equations and a generalized singularly perturbed Stokes-type equation involving the curl operator. To address the boundary layer, they introduce the nodal interpolation operator associated with the lowest-order N\'ed\'elec element of the second kind into the discrete bilinear forms, which similarly avoids additional stabilization terms.}

Upon reviewing this approach, we find that the IPVEM can be interpreted as an application of Nitsche's technique. In Nitsche's method, the boundary condition $\partial_{\bm{n}} u$ is imposed weakly via a penalty term, thereby imposing fewer restrictions on the boundary of the domain \cite{CHH25}. The IPVEM follows the same line, which becomes more apparent when examining the derivation of the scheme for inhomogeneous boundary conditions in \cite{FHYZ2026}. Consequently, the IPVEM should yield the optimal convergence rate, as previously observed in numerical tests. Building on this insight, we demonstrate in this article that the IPVEM developed in \cite{FY24} naturally achieves optimal convergence for fourth-order singularly perturbed problems with boundary layers.


\section{The continuous variational problem} \label{sec:cvariationalProb}

We first introduce some notations. For a bounded Lipschitz domain $D$ of dimension $d ~(d=1,2)$, the symbol $( v, w)_D = \int_D v w \d x$ denotes the $L^2$-inner product on $D$, $\|\cdot\|_{0,D}$ or $\|\cdot\|_{D}$ denotes the $L^2$-norm. If $D=\Omega$, we abbreviate $\|\cdot\|_{\Omega}$ as $\|\cdot\|$. {For vectorial functions,} $\bm{v} = (v_1,v_2)^T$ and $\bm{w} = (w_1,w_2)^T$, the inner product is defined as $( \bm{v}, \bm{w})_D = \int_D (v_1w_1 + v_2 w_2) \d x$.
 The set of scaled monomials $\mathbb{M}_r(D)$ is given by $\mathbb  M_{r} (D):=  \{  ( \frac{\boldsymbol x -  \boldsymbol x_D}{h_D} )^{\boldsymbol  s}, ~~ |\boldsymbol  s|\le r \}$, with the generic monomial denoted by $m_{D}$, where $h_D$ is the diameter of $D$, $\bm{x}_D$ is the centroid of $D$, and $r$ is a non-negative integer. For the multi-index ${\boldsymbol{s}} \in {\mathbb{N}^d}$, we follow the usual notation $\boldsymbol{x}^{\boldsymbol{s}} = x_1^{s_1} \cdots x_d^{s_d}$ with $|\bb{s}| = s_1 +  \cdots  + s_d$.
Conventionally, $\mathbb  M_r (D) =\{0\}$ for $r\le -1$.
Let $\mathcal{T}_h$ denote a tessellation of $\Omega$ into polygons. For a generic element $K$, define $h=\max_{K\in \mathcal{T}_h}h_K$ and $h_K={\rm diam}(K)$.
The set $\mathcal{E}_h$ represents all the edges in $\mathcal{T}_h$.
Let $e \subset \partial K$ be the common edge for elements { $K = K_-$ and $K_+$}, and let $v$ be a scalar function defined on $e$. We introduce the jump and average of $v$ on $e$ by $[v] = v^- - v^+$ and $\{v\} = \frac12 (v^- + v^+)$, where $v^-$ and $v^+$ are the traces of $v$ on $e$ from the interior and exterior of $K$, respectively. On a boundary edge, $[ v ] = v$ and $\{ v \} = v $. For a non-negative integer $m$ and an element $K\in \mathcal{T}_h$, denote by $\mathbb{P}_m(K)$ the set of all polynomials on $K$ with the total degree no more than $m$.
The broken Sobolev space is denoted as
\[H^m(\mathcal{T}_h):=\{w\in L^2(\Omega): w|_K\in H^m(K),  K\in \mathcal{T}_h \}\]
with the broken $H^m$-norm
\[
\|w\|_{m,h}:= (\sum_{K\in \mathcal{T}_h}\|w\|_{m,K}^2)^{1/2},\] and the broken $H^m$-seminorm
\[|w|_{m,h}=(\sum_{K\in \mathcal{T}_h}|v|_{m,K}^2)^{1/2}.\]
Moreover, for any two quantities $a$ and $b$, ``$a\lesssim b$" indicates ``$a\le C b$" with the constant $C$ independent of the mesh size $h_K$ {and $\varepsilon$}, and ``$a\eqsim b$" abbreviates ``$a\lesssim b\lesssim a$". The variational formulation of \eqref{strongform} is to find $u\in V:=H_0^2(\Omega)$ such that
\begin{equation}\label{weakform}
	\varepsilon^2a(u,v)+b(u,v)=(f,v),\quad  v\in H_0^2(\Omega),
\end{equation}
where
$a(u,v) = (\nabla^2u,\nabla^2v)$ and  $ b(u,v)=(\nabla u,\nabla v) $.

For the geometric subdivisions, we make the following assumption (cf. \cite{Brezzi-Buffa-Lipnikov-2009,Chen-HuangJ-2018}):
\begin{enumerate}[{\bf A1}.]
\item For each $K\in {\mathcal T}_h$, there exists a ``virtual triangulation" ${\mathcal T}_K$ of $K$ such that ${\mathcal T}_K$ is uniformly shape regular and quasi-uniform. The corresponding mesh size of ${\mathcal T}_K$ is proportional to $h_K$. Each edge of $K$ is a side of a certain triangle in ${\mathcal T}_K$.
\end{enumerate}
Based on this usual geometric {assumption},
by the standard Dupont-Scott theory \cite{BS2008}, for all $v\in H^l(K)$, there exists a certain $q \in \mathbb{P}_{l-1}(K)$ such that
\begin{equation}\label{BHe1}
|v - q|_{m,K} \lesssim h_K^{l - m} | v |_{l,K},\quad  0\le m\le l.
\end{equation}

\section{The interior penalty virtual element method} \label{sec:IPVEM}

The interior penalty virtual element method (IPVEM) was first proposed in \cite{ZMZW2023IPVEM}. In the construction, the authors first introduce a $C^1$-continuous virtual element space ($k \ge 2$)
\[\widetilde{V}_{k+2}(K)=\left\{v \in H^2(K):  \Delta^2 v \in \mathbb{P}_k(K), v|_e \in \mathbb{P}_{k+2}(e), \partial_{\bm n} v|_e \in \mathbb{P}_{k+1}(e), ~~e \subset \partial K\right\}.\]
For the equipped degree of freedoms (DoFs), we refer the reader to \cite{ZMZW2023IPVEM,FY24}:
{\begin{itemize}
	\item  $\tilde{\bm\chi}^p:$ the values of $v$ at the vertices of $K$,
   \begin{equation*}\label{dof1}
   \tilde{\chi}_z^p(v) = v(z), \quad \mbox{$z$ is a vertex of $K$}.
   \end{equation*}

	\item  $\tilde{ \bm \chi}^g$ : the values of $h_z \nabla v$ at the vertices  of $K$,
  \begin{equation*}\label{dof2}
  \tilde{\chi}_z^g(v) = h_z \nabla v(z), \quad \mbox{$z$ is a vertex of $K$},
   \end{equation*}
  where $h_z$ is a characteristic length attached to each vertex $z$.
	\item  $\tilde{\bm \chi}^e$ : the moments of $v$ on edges up to degree $k-2$,
   \begin{equation*}\label{dof3}
   \tilde{\chi}_e(v) = |e|^{-1}(m_e, v)_e, \quad m_e \in \mathbb{M}_{k-2}(e), \quad e \subset\partial K.
    \end{equation*}
	\item  $\tilde{\bm \chi}^n$ : the moments of $\partial_{\bm n_e} v$ on edges up to degree $k-1$,
  \begin{equation*}\label{dof4}
  { \tilde{\chi}_e^n(v)} = (m_e,\partial_{\bm n_e} v)_e, \quad  m_e \in \mathbb{M}_{k-1}(e), \quad e \subset \partial K.
   \end{equation*}
	\item  $\tilde{\bm \chi}^K$ : the moments on element $K$ up to degree $k$,
  \begin{equation*}\label{dof5}
  \tilde{\chi}_K(v) = |K|^{-1}(m_K,v)_K, \quad m_K \in \mathbb{M}_k(K).
  \end{equation*}
\end{itemize}}
Given $v_h \in \widetilde{V}_{k+2}(K)$, Ref.~\cite{ZMZW2023IPVEM} proposes an approximate $H^1$-elliptic projection $\Pi_K^{\nabla} v_h \in \mathbb{P}_k(K)$ described by the following equations:
\[
\left\{\begin{aligned}
(\nabla \Pi_K^{\nabla} v_h, \nabla q)_K & = -(v_h, \Delta q)_K + \sum\limits_{e\subset \partial K} Q_{2k-1}^e( v_h \partial_{\bm n_e} q), \quad q \in \mathbb{P}_k(K), \notag\\
	\sum_{z \in \mathcal{V}_K} \Pi_K^{\nabla} v_h(z)& =\sum_{z \in \mathcal{V}_K} v_h(z),\label{MH1}
\end{aligned}\right.
\]
where $\mathcal{V}_K$ is the set of the vertices of $K$, $Q_{2k-1}^e v := |e| \sum\limits_{i=0}^k \omega_i v(\bm{x}_i^e) \approx \int_e v(s) \d s$, with $(\omega_i, \bm{x}_i^e)$ being the $(k+1)$ Gauss-Lobatto quadrature weights and points, $\bm{x}_0^e$ and $\bm{x}_k^e$ are the endpoints of $e$. In this case, $\Pi_K^\nabla v_h$ is computable by only using the DoFs of the $H^1$-conforming virtual element space given by
\[V_k^{1,c}(K):= \left\{ v \in H^1(K): \Delta v \in \mathbb{P}_{k-2}(K),~~ v|_{\partial K} \in \mathbb{B}_k(\partial K) \right\},\]
and
\[
\mathbb{B}_k(\partial K) = \{ v \in C^0(\partial K): v|_e \in \mathbb{P}_k(e),~~ e \subset \partial K \}.\]

By using the standard enhancement technique \cite{Ahmad-Alsaedi-Brezzi-2013}, which substitutes the redundant DoFs of $v$ with those of $\Pi_K^{\nabla}v$, the local interior penalty virtual element space in \cite{ZMZW2023IPVEM} is defined as
\begin{align*}
V_k(K)
& = \{v \in \widetilde{V}_{k+2}(K): (v,m_K)_K = (\Pi_K^{\nabla}v,m_K)_K, ~ m_K\in \mathbb{M}_k(K) \backslash \mathbb{M}_{k-2}(K),   \nabla v(z)=\nabla \Pi_K^{\nabla}(z),\\
& ~\mbox{$z \in \mathcal{V}_K$ is any vertex of $K$},  (m_e,\partial_{\bm n_e} v)_e=(m_e,\partial_{\bm n_e} \Pi_K^{\nabla}v)_e,~ m_e \in \mathbb{M}_{k-1}(e), \, e \subset \partial K\}.
\end{align*}
The associated DoFs are given by
\begin{itemize}
    \item the values of $v(z)$, $z \in \mathcal{V}_K$,
    \item the values of $v(\boldsymbol{x}_i^e)$, $i=1, \cdots, k-1$, $e \subset \partial K$,
    \item the moments $|K|^{-1}( m_K, v)_K$, $m_K \in \mathbb{M}_{k-2}(K)$.
\end{itemize}
Furthermore, we use $V_h$ to denote the global nonconforming virtual element space, which is defined elementwise and required that the DoFs are continuous through the interior vertices and edges while zero for the boundary DoFs.

As usual, we can define the $H^2$-projection operator $\Pi_K^{\Delta}: V_k(K) \to \mathbb{P}_k(K)$ by finding the solution $\Pi_K^{\Delta} v \in \mathbb{P}_k(K)$ of
\[\begin{cases}
	a^K (\Pi_K^{\Delta} v, q )=a^K(v, q),\quad  q \in \mathbb{P}_k(K), \\
	\widehat{\Pi_K^{\Delta} v}=\widehat{v}, \quad \widehat{\nabla \Pi_K^{\Delta} v}=\widehat{\nabla v}
\end{cases}\]
for any given $v \in V_k(K)$, { where $a^K(\cdot, \cdot) = (\nabla^2 \cdot ,\nabla^2 \cdot )_K$} and
 the quasi-average $\widehat{v}$ is defined by
\[
\widehat{v}=\frac{1}{|\partial K|} \int_{\partial K} v \d s .
\]

\begin{lemma}\cite{ZMZW2023IPVEM,FY24} \label{lem:Pih}
Let $m=1,2$ and $0\le \ell \le m$. For all $v_h\in V_k(K)$, there hold
	\begin{align}
		&|v_h|_{m,K}\lesssim h_K^{\ell-m}|v_h|_{\ell,K} ,\label{inv}\\
		&|\Pi_K^{\nabla}v_h|_{m,K}\lesssim  |v_h|_{m,K},\label{Pi}\\
		&|v_h-\Pi_K^{\nabla}v_h|_{\ell,K}\lesssim  h_K^{\ell-m}|v_h|_{m,K}.\label{Proj}
	\end{align}
\end{lemma}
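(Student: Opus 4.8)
The plan is to prove the three bounds in the order \eqref{inv}, \eqref{Pi}, \eqref{Proj}, with \eqref{inv} the foundational one. For \eqref{inv} I would follow the now-standard virtual-element inverse-inequality argument, which uses Assumption~{\bf A1} in an essential way: on a polygon equipped with the uniformly shape-regular virtual triangulation $\mathcal T_K$, all polynomial inverse and trace inequalities on $K$ and its edges hold with constants depending only on the shape-regularity parameter (for a polynomial one simply restricts to a single triangle of $\mathcal T_K$). It suffices to treat the two atomic cases $|v_h|_{1,K}\lesssim h_K^{-1}\|v_h\|_{0,K}$ and $|v_h|_{2,K}\lesssim h_K^{-1}|v_h|_{1,K}$, the case $\ell=0,\,m=2$ being their composition. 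The mechanism is to exploit that $v_h$ solves the biharmonic problem $\Delta^2 v_h=p\in\mathbb P_k(K)$ in $K$ with boundary data $v_h|_e\in\mathbb P_{k+2}(e)$, $\partial_{\bm n}v_h|_e\in\mathbb P_{k+1}(e)$: splitting $v_h=w+z$ with $w\in H_0^2(K)$ the solution of $\Delta^2 w=p$ and $z$ the biharmonic function carrying the boundary data, one estimates $w$ by an energy identity and the Poincar\'e inequality, and $z$ by a scaling argument on the reference configuration, using that its boundary data lives in a fixed finite-dimensional space of polynomials. I expect this step---in particular controlling the biharmonic part $z$---to be the main obstacle; it is exactly the content established in \cite{ZMZW2023IPVEM,FY24}, so in the write-up I would only recall its structure.

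For \eqref{Pi} I would argue directly from the definition of $\Pi_K^{\nabla}$. Taking $q=\Pi_K^{\nabla}v_h$ in the first defining equation and applying Green's identity to $-(v_h,\Delta q)_K$ turns the right-hand side into $(\nabla v_h,\nabla\Pi_K^{\nabla}v_h)_K+\sum_e E^e(v_h\,\partial_{\bm n_e}\Pi_K^{\nabla}v_h)$, where $E^e=Q_{2k-1}^e-\int_e$ is the Gauss--Lobatto quadrature-error functional on $e$. Since the $(k+1)$-point rule is exact on $\mathbb P_{2k-1}(e)$ and $\partial_{\bm n_e}\Pi_K^{\nabla}v_h|_e\in\mathbb P_{k-1}(e)$, the functional $E^e$ kills $\partial_{\bm n_e}\Pi_K^{\nabla}v_h$, so one may replace $v_h$ by $v_h-\bar v_h$ inside $E^e$; estimating $|E^e(\cdot)|\lesssim|e|\,\|(v_h-\bar v_h)\,\partial_{\bm n_e}\Pi_K^{\nabla}v_h\|_{L^\infty(e)}$ and then using one-dimensional polynomial inverse estimates on $e$, the trace inequality for $v_h-\bar v_h$, the Poincar\'e inequality, and the polynomial trace inequality for $\nabla\Pi_K^{\nabla}v_h$, one obtains $\sum_e|E^e(\cdot)|\lesssim|v_h|_{1,K}\,|\Pi_K^{\nabla}v_h|_{1,K}$. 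Combining with Cauchy--Schwarz on the main term and dividing by $|\Pi_K^{\nabla}v_h|_{1,K}$ gives the case $m=1$. For $m=2$, since $\Pi_K^{\nabla}$ reproduces $\mathbb P_k(K)$ (Green's identity together with exactness of the rule on the relevant integrands), one chooses $q_1\in\mathbb P_1(K)$ from \eqref{BHe1} and uses a polynomial inverse estimate on $\mathbb P_k(K)$ and the case $m=1$ applied to $v_h-q_1$: $|\Pi_K^{\nabla}v_h|_{2,K}=|\Pi_K^{\nabla}(v_h-q_1)|_{2,K}\lesssim h_K^{-1}|\Pi_K^{\nabla}(v_h-q_1)|_{1,K}\lesssim h_K^{-1}|v_h-q_1|_{1,K}\lesssim|v_h|_{2,K}$.

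For \eqref{Proj}, the case $\ell=m$ is immediate from \eqref{Pi}. For $\ell<m$, pick $q\in\mathbb P_1(K)$ from \eqref{BHe1}---a constant when $(\ell,m)=(0,1)$ and a linear polynomial when $(\ell,m)=(1,2)$---so that $\Pi_K^{\nabla}q=q$ and $v_h-\Pi_K^{\nabla}v_h=(v_h-q)-\Pi_K^{\nabla}(v_h-q)$ with $v_h-q\in V_k(K)$ (note $\mathbb P_1(K)\subset V_k(K)$). When $\ell=1$, the triangle inequality together with \eqref{Pi} ($m=1$) and \eqref{BHe1} gives $|v_h-\Pi_K^{\nabla}v_h|_{1,K}\lesssim|v_h-q|_{1,K}\lesssim h_K|v_h|_{2,K}$. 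When $\ell=0$, decompose $\Pi_K^{\nabla}(v_h-q)$ into its element mean and a zero-mean remainder: the remainder is bounded in $L^2(K)$ by $h_K|\Pi_K^{\nabla}(v_h-q)|_{1,K}\lesssim h_K|v_h|_{1,K}$ (Poincar\'e and \eqref{Pi}), while the mean is controlled via the vertex-sum normalization of $\Pi_K^{\nabla}$ together with polynomial and trace inequalities on $\partial K$, again by $h_K|v_h|_{1,K}$; hence $\|v_h-\Pi_K^{\nabla}v_h\|_{0,K}\le\|v_h-q\|_{0,K}+\|\Pi_K^{\nabla}(v_h-q)\|_{0,K}\lesssim h_K|v_h|_{1,K}$. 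The remaining case $(\ell,m)=(0,2)$ follows from the $(0,1)$ case and \eqref{inv}, or more directly from the sharpened bound $\|v_h-\Pi_K^{\nabla}v_h\|_{0,K}\lesssim h_K^2|v_h|_{2,K}$ (the same argument with $q\in\mathbb P_1(K)$ from \eqref{BHe1}) combined with the trivial inequality $h_K^2\lesssim h_K^{-2}$, the hidden constant being $\mathrm{diam}(\Omega)^4$.
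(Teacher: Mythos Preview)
The paper does not prove this lemma at all: it is simply stated with citations to \cite{ZMZW2023IPVEM,FY24} and used as a black box. So there is no ``paper's own proof'' to compare against, and your proposal is not competing with anything written here.

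Your sketch is a reasonable outline of how these estimates are obtained in the cited references. The splitting $v_h=w+z$ with $w\in H_0^2(K)$ biharmonic-forced and $z$ biharmonic with polynomial boundary data is indeed the standard mechanism for \eqref{inv}, and you are right that the $z$-part (carried by a finite-dimensional trace space, hence amenable to a scaling/equivalence-of-norms argument under {\bf A1}) is where the work lies. For \eqref{Pi}, the Green-identity plus quadrature-error decomposition you describe is exactly the approach of \cite{ZMZW2023IPVEM}; the key observation that $\partial_{\bm n_e}\Pi_K^{\nabla}v_h|_e\in\mathbb P_{k-1}(e)$, so that any constant factor can be absorbed by exactness of the $(k+1)$-point Gauss--Lobatto rule on $\mathbb P_{2k-1}(e)$, is correct and is what makes the quadrature perturbation harmless. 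Your reduction of the $m=2$ case to the $m=1$ case via polynomial invariance of $\Pi_K^{\nabla}$ and a polynomial inverse estimate is standard and fine (and $\mathbb P_1(K)\subset V_k(K)$ holds since $\Pi_K^{\nabla}$ fixes linear polynomials, so the enhancement constraints are preserved).

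One remark on \eqref{Proj}: you in fact prove the sharper bound $|v_h-\Pi_K^{\nabla}v_h|_{\ell,K}\lesssim h_K^{m-\ell}|v_h|_{m,K}$, which is the usual approximation estimate, and then note that it implies the stated $h_K^{\ell-m}$ bound via $h_K^{m-\ell}\lesssim h_K^{\ell-m}$. That is correct and suggests the exponent in \eqref{Proj} as printed is likely a typo for $m-\ell$; either way your argument covers it.
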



For the construction of the IPVEM, we do not use the form presented in \cite{ZMZW2023IPVEM}, but instead adopt the modified version from \cite{FY24}, which simplifies the implementation process.
For the fourth-order term, the discrete bilinear form is defined by
\[
a_h^K(v, w)=a^K(\Pi_K^{\Delta} v, \Pi_K^{\Delta} w)+S^K(v-\Pi_K^{\Delta} v, w-\Pi_K^{\Delta} w), \quad v, w \in V_k(K),
\]
with
\[S^K(v-\Pi_K^{\Delta} v,w-\Pi_K^{\Delta} w)=h_K^{-2} \sum\limits_{i=1}^{n_K} \chi_i(v-\Pi_K^{\Delta} v) \chi_i(w-\Pi_K^{\Delta} w),
\]
 where $\{\chi_i\}$ are the local DoFs on $K$ with $n_K$ being the number of the DoFs. For the second-order term, we define the local bilinear form as
\[b_h^K(v,w):=(\nabla \Pi_K^{\nabla}v,\nabla \Pi_K^{\nabla}w)_K + \sum_{i=1}^{n_K} \chi_i(v-\Pi_K^{\nabla} v) \chi_i (w-\Pi_K^{\nabla} w).\]
The interior penalty virtual element methods for the problem \eqref{strongform} can be described as follows: Find $u_h\in  V_h$ such that
\begin{equation}\label{IPVEM1}
	\varepsilon^2\mathcal{A}_h(u_h,v_h)+b_h(u_h, v_h) = \langle f_h,v_h\rangle,\quad v_h\in V_h,
\end{equation}
where
\[\mathcal{A}_h(u_h,v_h) = a_h(u_h,v_h)+J_1(u_h,v_h)+J_2(u_h,v_h)+J_3(u_h,v_h),\]
 with
 \[a_h(v,w)=\sum_{K\in \mathcal{T}_h} a_h^K(v,w),\quad\,b_h(v,w)=\sum_{K\in \mathcal{T}_h} b_h^K(v,w),\]
 and
\begin{align*}
& J_1(v,w)= \sum_{e\in\mathcal{E}_h}\frac{\lambda_e}{|e|}\int_e\Big[\frac{\partial \Pi_h^\nabla v}{\partial \bm{n}_e}\Big]\Big[\frac{\partial \Pi_h^\nabla w}{\partial \bm{n}_e}\Big] \d s \qquad \mbox{($\lambda_e\eqsim 1$ is some edge-dependent parameter \cite{FY24})},   \\
& J_2(v,w)=-\sum_{e \in \mathcal{E}_h}\int_e\Big\{\frac{\partial^2\Pi_h^{\nabla}v}{\partial \bm{n}_e^2}\Big\}\Big[\frac{\partial \Pi_h^{\nabla} w}{\partial \bm{n}_e}\Big] \d s, \\
&J_3(v,w)=-\sum_{e\in\mathcal{E}_h}\int_e\Big\{\frac{\partial^2 \Pi_h^{\nabla}w}{\partial \bm{n}_e^2}\Big\}\Big[\frac{\partial \Pi_h^{\nabla} v}{\partial \bm{n}_e}\Big] \d s.
\end{align*}

The right-hand side is $\langle f_h,v_h \rangle= \sum\limits_{K\in \mathcal{T}_h}\int_{K} f\,\Pi_{0,K}^k v_h \d x$, where $\Pi_{0,K}^k$ is the $L^2$ projector onto $\mathbb{P}_k(K)$. In what follows, we define
\[
\|w\|_h^2=|w|_{2,h}^2+J_1(w,w), \quad \| w \|_{\varepsilon,h} ^2=\varepsilon^2\|w\|_h^2+|w|_{1,h}^2.\]

\section{The optimal and uniform error estimate} \label{sec:err}

We first present some results for the error analysis.  Since $V_h$ and $V_h^{1,c}$ share the same DoFs, as done in \cite{FY24}, we can introduce the interpolation from $V_h$ to $V_h^{1,c}$. For any given $v_h\in V_h$, let $I_h^c v_h$ be the nodal interpolant of $v_h$ in $V_h^{1,c}$.
According to Lemmas 4.2 and 4.3 in \cite{FY24}, we have the following abstract Strang-type lemma.
\begin{lemma}\label{lem:StrangIPVEM}
	Assume that $u\in H_0^2(\Omega)\cap H^{k+1}(\Omega)$ is the solution to \eqref{weakform} with $k\ge 2$. Then we have the error estimate
\begin{equation}\label{Strangerr}
\|u-u_h\|_{\varepsilon,h}\lesssim \|u-I_hu\|_{\varepsilon,h} + \| u-u_{\pi} \|_{\varepsilon,h}  + \|f-f_h\|_{V_h'} + E_h(u),
\end{equation}
where $I_h$ is the interpolation operator to $V_h$ and $u_\pi$ is a piecewise polynomial with degree $\le k$ that satisfies \eqref{BHe1},
	\[\|f-f_h\|_{V_h'} = \sup _{v_h \in V_h\backslash \{0\}} \frac{\langle f - f_h, v_h\rangle}{\| v_h \|_{\varepsilon,h}}\]
	and the consistency term
	\begin{equation}\label{consistencyterm}
		E_h(u) = \sup _{v_h \in V_h \backslash\{0\}} \frac{E_A(u,v_h) + E_J(u,v_h)}{\|v_h\|_{\varepsilon,h}},
	\end{equation}
where
\begin{align*}
 &E_A(u,v_h) = E_{A1}(u,v_h) + E_{A2}(u,v_h)+E_{A3}(u,v_h), \\
 &E_J(u,v_h) = \varepsilon^2(J_1(I_hu,v_h)+J_3(I_hu,v_h)),
\end{align*}
with
\begin{align*}
& E_{A1}(u,v_h) = \varepsilon^2\sum_{e \in \mathcal{E}_h}\int_e  \Big\{\frac{\partial^2 (u-\Pi_h ^\nabla I_h u)}{\partial \boldsymbol{n}_e^2}\Big\} \left[\frac{\partial  v_h  }{\partial \boldsymbol{n}_e}\right]  \d s,\\
& E_{A2}(u,v_h) = \varepsilon^2\sum_{e \in \mathcal{E}_h}\int_e \frac{\partial^2 u}{\partial \boldsymbol{n}_e \partial \boldsymbol{t}_e}\left[\frac{\partial v_h}{\partial \boldsymbol{t}_e}\right] \d s, \\
& E_{A3}(u,v_h) = -\varepsilon^2 \sum_{K\in \mathcal{T}_h}(\nabla \Delta u,\nabla v_h)_K +  \sum_{K\in \mathcal{T}_h}(\nabla u,  \nabla I_h^c v_h)_K - (f, v_h).
\end{align*}
Here, $\bm{t}_e$ is a unit tangential vector of an edge $e\in \mathcal{E}_h$.
\end{lemma}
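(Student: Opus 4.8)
This is an abstract error identity in the spirit of the second Strang lemma for nonconforming methods, so I would proceed in three moves: invoke uniform discrete coercivity, subtract the scheme, and rearrange the resulting residual into the stated consistency functionals. First I would split $u-u_h=(u-I_hu)+(I_hu-u_h)$ and put $e_h:=I_hu-u_h\in V_h$; since $\|u-I_hu\|_{\varepsilon,h}$ is already present on the right-hand side of \eqref{Strangerr}, it suffices to estimate $\|e_h\|_{\varepsilon,h}$. The structural ingredient is that $\varepsilon^2\mathcal{A}_h(\cdot,\cdot)+b_h(\cdot,\cdot)$ is coercive on $V_h$ with respect to $\|\cdot\|_{\varepsilon,h}$, uniformly in $\varepsilon$ and $h$: the stabilization in $b_h^K$ together with $|v_h|_{1,K}\lesssim|\Pi_K^{\nabla}v_h|_{1,K}$ (Lemma~\ref{lem:Pih}) controls $|v_h|_{1,h}^2$; the pair $(a_h,J_1)$ with $\lambda_e\eqsim 1$ controls $\|v_h\|_h^2$; and the Nitsche symmetrization terms $J_2+J_3$ are absorbed into $a_h+J_1$ by Young's inequality together with trace and inverse estimates applied to the polynomials $\Pi_h^{\nabla}v_h$, provided $\min_e\lambda_e$ is taken large enough (still $\eqsim 1$). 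Hence $\|e_h\|_{\varepsilon,h}^2\lesssim\varepsilon^2\mathcal{A}_h(e_h,e_h)+b_h(e_h,e_h)$.

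\noindent\textbf{Extracting the residual.} Subtracting the scheme \eqref{IPVEM1} (tested with $e_h$) from the coercivity bound gives
\[
\|e_h\|_{\varepsilon,h}^2\ \lesssim\ \varepsilon^2\mathcal{A}_h(I_hu,e_h)+b_h(I_hu,e_h)-\langle f_h,e_h\rangle .
\]
I would then insert a broken polynomial $u_\pi$ of degree $\le k$ satisfying \eqref{BHe1} and use the polynomial consistency of $a_h^K$ (which is exact) and of $b_h^K$ (exact up to the Gauss--Lobatto quadrature inside $\Pi_h^{\nabla}$, whose error is of higher order), together with the stability estimates of Lemma~\ref{lem:Pih}. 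This replaces $\varepsilon^2 a_h(I_hu,e_h)+b_h(I_hu,e_h)$ by the broken exact forms $\varepsilon^2\sum_{K}(\nabla^2 u,\nabla^2 e_h)_K+\sum_{K}(\nabla u,\nabla e_h)_K$, up to remainders controlled by $\|u-u_\pi\|_{\varepsilon,h}$ and $\|u-I_hu\|_{\varepsilon,h}$; likewise $\langle f_h,e_h\rangle$ is traded for $(f,e_h)$ up to $\|f-f_h\|_{V_h'}\,\|e_h\|_{\varepsilon,h}$. The penalty contributions $\varepsilon^2(J_1+J_2+J_3)(I_hu,e_h)$ are kept untouched for the moment.

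\noindent\textbf{Identifying $E_{A}$ and $E_{J}$.} On each $K$ I would integrate $\varepsilon^2(\nabla^2 u,\nabla^2 e_h)_K$ by parts once, obtaining $-\varepsilon^2(\nabla\Delta u,\nabla e_h)_K+\varepsilon^2\int_{\partial K}\big(\tfrac{\partial^2 u}{\partial\bm{n}_e^2}\tfrac{\partial e_h}{\partial\bm{n}_e}+\tfrac{\partial^2 u}{\partial\bm{n}_e\partial\bm{t}_e}\tfrac{\partial e_h}{\partial\bm{t}_e}\big)\,\d s$. Summing over $\mathcal{T}_h$ and using that $\nabla^2 u$ has single-valued traces ($u\in H^{k+1}(\Omega)\subset H^3(\Omega)$), the edge integrals turn into jump integrals: the normal--tangential part is exactly $E_{A2}(u,e_h)$, while the normal--normal part, combined with $\varepsilon^2 J_2(I_hu,e_h)$ (which introduces $\Pi_h^{\nabla}I_hu$; here the edge-moment conditions in the definition of $V_k(K)$ allow one to replace $[\partial_{\bm{n}_e}e_h]$ by $[\partial_{\bm{n}_e}\Pi_h^{\nabla}e_h]$ against the degree-$(k-2)$ factor $\{\partial^2_{\bm{n}_e\bm{n}_e}\Pi_h^{\nabla}I_hu\}$), becomes exactly $E_{A1}(u,e_h)$; the remaining $\varepsilon^2 J_1(I_hu,e_h)+\varepsilon^2 J_3(I_hu,e_h)$ is $E_{J}(u,e_h)$. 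For the second-order part I would pass to the conforming companion, writing $\sum_{K}(\nabla u,\nabla e_h)_K=\sum_{K}(\nabla u,\nabla I_h^{c}e_h)_K+\sum_{K}(\nabla u,\nabla(e_h-I_h^{c}e_h))_K$; the combination $-\varepsilon^2\sum_{K}(\nabla\Delta u,\nabla e_h)_K+\sum_{K}(\nabla u,\nabla I_h^{c}e_h)_K-(f,e_h)$ is, by definition, exactly $E_{A3}(u,e_h)$, and the leftover $\sum_{K}(\nabla u,\nabla(e_h-I_h^{c}e_h))_K$ is controlled, after inserting $u_\pi$ and integrating by parts, by the $H^1$-stability of $I_h^{c}$ and the jump/companion estimates, producing only quantities already present on the right of \eqref{Strangerr}. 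Dividing by $\|e_h\|_{\varepsilon,h}$ and taking the supremum yields \eqref{Strangerr}; this bookkeeping is precisely what is carried out in Lemmas~4.2 and~4.3 of \cite{FY24}.

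\noindent\textbf{Main obstacle.} The delicate point is the last move: the term-by-term matching of the inter-element edge contributions of the integration by parts, the projector-based penalty forms $J_1,J_2,J_3$ (whose jumps involve $\Pi_h^{\nabla}$ rather than the functions themselves, so the $V_k(K)$ moment conditions must be exploited to reconcile them), and the conforming-companion correction $e_h-I_h^{c}e_h$, all while keeping the powers of $\varepsilon$ consistent with the splitting $\|\cdot\|_{\varepsilon,h}^2=\varepsilon^2\|\cdot\|_h^2+|\cdot|_{1,h}^2$, so that each residual is ultimately measured against the correct $\varepsilon$-weighted norm. It is this careful alignment --- rather than any single hard estimate --- that makes the resulting bound, and hence the subsequent error analysis, both optimal and uniform in $\varepsilon$.
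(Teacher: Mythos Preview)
Your outline is correct and matches the approach the paper adopts: the paper does not give its own proof of this lemma but simply invokes Lemmas~4.2 and~4.3 of \cite{FY24}, and the Strang-type bookkeeping you describe --- coercivity, subtract the scheme, pass from $a_h,b_h$ to the broken exact forms via polynomial consistency and Lemma~\ref{lem:Pih}, integrate $(\nabla^2u,\nabla^2 e_h)_K$ by parts, exploit the edge-moment identities in $V_k(K)$ to reconcile the projector-based penalties with the raw jumps, and route the second-order term through the conforming companion $I_h^c$ --- is precisely that argument. One small correction: $b_h^K$ enjoys exact polynomial consistency (since the $(k{+}1)$-point Gauss--Lobatto rule integrates $p\,\partial_{\bm n}q$ exactly for $p,q\in\mathbb{P}_k$, so $\Pi_K^{\nabla}p=p$), not merely consistency ``up to a higher-order quadrature error''.
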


The following trace inequalities are very useful for our forthcoming analysis \cite{BS2008,Brenner2003}.
\begin{lemma}\label{lem:trace}
For any $K\in \mathcal{T}_h$, there hold
\begin{align}
&\|v\|_{0,\partial K} \lesssim \|v\|_{0,K}^{1/2} \|v\|_{1,K}^{1/2}, \quad v \in H^1(K),  \label{tracemul}\\
&\| v \|_{0,\partial K} \lesssim  h_K^{1/2}| v |_{1,K} +h_K^{ - 1/2}\| v \|_{0,K},\quad v \in H^1(K). \label{trace}
\end{align}
\end{lemma}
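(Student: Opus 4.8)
Both estimates are classical trace inequalities, so the plan is the standard two-step reduction: first use assumption {\bf A1} to replace $\partial K$ by the boundaries of shape-regular sub-triangles, and then pull each sub-triangle back to a fixed reference triangle, where the actual work is done. Concretely, for an edge $e \subset \partial K$, assumption {\bf A1} provides a triangle $T \in \mathcal{T}_K$ with $e \subset \partial T$ that is shape regular and satisfies $h_T \eqsim h_K$; summing over the finitely many such triangles reduces \eqref{tracemul} and \eqref{trace} to the same inequalities on a single $T$. On $T$ I would introduce the affine map $F_T(\hat{\bm x}) = B_T \hat{\bm x} + \bm b_T$ from a fixed reference triangle $\hat T$, with $\|B_T\| \lesssim h_T$, $\|B_T^{-1}\| \lesssim h_T^{-1}$ and $|\det B_T| \eqsim h_T^2$ (all consequences of shape regularity), and pass to $\hat v := v \circ F_T$.

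On $\hat T$ I would prove the multiplicative form first. Choosing the vector field $\bm\phi(\hat{\bm x}) = \hat{\bm x} - \hat{\bm x}_\ast$ with $\hat{\bm x}_\ast$ the incenter of $\hat T$, one has $\bm\phi \cdot \bm n \ge \rho_{\hat T} > 0$ on $\partial\hat T$, $\bm\phi$ bounded on the fixed set $\hat T$, and $\nabla\cdot\bm\phi \equiv 2$; the divergence theorem applied to $\hat v^2 \bm\phi$ then gives
\[
\rho_{\hat T}\,\|\hat v\|_{0,\partial\hat T}^2 \;\le\; \int_{\partial\hat T}\hat v^2(\bm\phi\cdot\bm n)\,\d s \;=\; \int_{\hat T}\big(2\hat v\,\nabla\hat v\cdot\bm\phi + \hat v^2\,\nabla\cdot\bm\phi\big)\,\d x \;\lesssim\; \|\hat v\|_{0,\hat T}\,|\hat v|_{1,\hat T} + \|\hat v\|_{0,\hat T}^2
\]
by Cauchy--Schwarz, and since $\|\hat v\|_{0,\hat T}|\hat v|_{1,\hat T} + \|\hat v\|_{0,\hat T}^2 \le \|\hat v\|_{0,\hat T}(|\hat v|_{1,\hat T} + \|\hat v\|_{0,\hat T}) \lesssim \|\hat v\|_{0,\hat T}\|\hat v\|_{1,\hat T}$, this is the multiplicative trace inequality on $\hat T$. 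Transferring it back through $F_T$ with the usual scaling relations $\|v\|_{0,e} \eqsim h_T^{1/2}\|\hat v\|_{0,\hat e}$, $\|v\|_{0,T} \eqsim h_T\|\hat v\|_{0,\hat T}$, $|v|_{1,T} \eqsim |\hat v|_{1,\hat T}$ (with $\hat e$ the preimage of $e$), then summing over $T$ (using a discrete Cauchy--Schwarz in the sum) and recalling $h_K \le \mathrm{diam}(\Omega)$, yields \eqref{tracemul}.

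Finally, \eqref{trace} requires no further geometry: starting from \eqref{tracemul}, I would use $\|v\|_{1,K} \le \|v\|_{0,K} + |v|_{1,K}$, the subadditivity of the square root, and Young's inequality in the form $\|v\|_{0,K}^{1/2}|v|_{1,K}^{1/2} \le \tfrac12(h_K^{-1/2}\|v\|_{0,K} + h_K^{1/2}|v|_{1,K})$, absorbing the bounded factor $h_K \le \mathrm{diam}(\Omega)$ on the pure $L^2$ term, to arrive at $\|v\|_{0,\partial K} \lesssim h_K^{-1/2}\|v\|_{0,K} + h_K^{1/2}|v|_{1,K}$. (Equivalently, \eqref{trace} also drops straight out of the reference-triangle trace theorem $\|\hat v\|_{0,\partial\hat T}\lesssim\|\hat v\|_{1,\hat T}$ combined with the scaling relations above.) I do not expect any genuine obstacle here; the only thing needing care is the bookkeeping in the {\bf A1}-reduction and the fact that the $H^1$-norm scales inhomogeneously under the dilation $F_T$, so that all the real analysis stays confined to the fixed triangle $\hat T$.
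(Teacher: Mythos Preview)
The paper does not prove this lemma; it simply cites \cite{BS2008,Brenner2003} for these standard trace inequalities and moves on. Your plan is the textbook argument (reduce via {\bf A1} to a shape-regular sub-triangle, map to a reference element, apply the divergence-theorem identity, scale back), and it correctly delivers \eqref{trace}.

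One small caveat on \eqref{tracemul}: the scaling step you describe actually produces
\[
\|v\|_{0,\partial K}^2 \;\lesssim\; h_K^{-1}\|v\|_{0,K}^2 + \|v\|_{0,K}\,|v|_{1,K},
\]
and invoking $h_K\le\mathrm{diam}(\Omega)$ does \emph{not} turn this into $\|v\|_{0,K}^{1/2}\|v\|_{1,K}^{1/2}$ with a constant independent of $h_K$ (take $v\equiv1$ on $K$ to see that \eqref{tracemul} as written cannot hold uniformly as $h_K\to0$). This is a wrinkle in how the lemma is stated rather than a flaw in your argument: the uniform multiplicative form should carry an explicit $h_K^{-1}$, and in any case only the additive estimate \eqref{trace} is actually invoked in the subsequent analysis.
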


As shown in Lemma 3.11 of \cite{ZMZW2023IPVEM}, we can derive the following interpolation error estimate.
\begin{lemma}\label{lem:interpIPVEM}
	For any $v\in H^{\ell}(K)$ with $2\le \ell\le k+1$, it holds
	\begin{equation} \label{interp}
		|v-I_hv|_{m,K}\lesssim h_K^{\ell-m}|v|_{\ell,K},\quad m=0,1,2.
	\end{equation}	
\end{lemma}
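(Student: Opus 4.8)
The plan is to reduce \eqref{interp} to the Dupont--Scott estimate \eqref{BHe1} together with an $L^2$-stability bound for the interpolation operator $I_h$ on $V_k(K)$, following the standard three-part argument for virtual element interpolation. The first ingredient is that $I_h$ reproduces $\mathbb{P}_k(K)$: every $q\in\mathbb{P}_k(K)$ lies in $\widetilde V_{k+2}(K)$ (since $\Delta^2q=0$, $q|_e\in\mathbb{P}_{k+2}(e)$ and $\partial_{\bm{n}}q|_e\in\mathbb{P}_{k+1}(e)$), and the enhancement constraints defining $V_k(K)$ all reduce to $\Pi_K^{\nabla}q=q$. This identity holds because the Gauss--Lobatto rule $Q_{2k-1}^e$ integrates the edge integrand $q\,\partial_{\bm{n}_e}p$ (of total degree at most $2k-1$ on $e$) exactly, so the defining relations of $\Pi_K^{\nabla}$, after integration by parts, collapse to $(\nabla(\Pi_K^{\nabla}q-q),\nabla p)_K=0$ for all $p\in\mathbb{P}_k(K)$ plus the vertex normalization, which forces $\Pi_K^{\nabla}q=q$. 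Since the DoFs are unisolvent on $V_k(K)$ and $q$ reproduces its own DoFs, $I_hq=q$.

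Granting this, I would take $q\in\mathbb{P}_{\ell-1}(K)\subseteq\mathbb{P}_k(K)$ from \eqref{BHe1} so that $|v-q|_{j,K}\lesssim h_K^{\ell-j}|v|_{\ell,K}$ for $j=0,1,2$ (legitimate since $2\le\ell\le k+1$), and set $\delta=v-q\in H^2(K)$, which in two dimensions embeds into $C^0(\overline K)$, so the DoFs of $\delta$ make sense. By polynomial reproduction $v-I_hv=\delta-I_h\delta$, hence $|v-I_hv|_{m,K}\le|\delta|_{m,K}+|I_h\delta|_{m,K}$; the first term already obeys the claimed estimate. For the second I would invoke the inverse inequality \eqref{inv} to get $|I_h\delta|_{m,K}\lesssim h_K^{-m}\|I_h\delta\|_{0,K}$, so that once the stability bound $\|I_h\delta\|_{0,K}\lesssim\|\delta\|_{0,K}+h_K|\delta|_{1,K}+h_K^2|\delta|_{2,K}$ is in hand, substituting $\|\delta\|_{j,K}\lesssim h_K^{\ell-j}|v|_{\ell,K}$ yields $\|I_h\delta\|_{0,K}\lesssim h_K^{\ell}|v|_{\ell,K}$ and therefore $|I_h\delta|_{m,K}\lesssim h_K^{\ell-m}|v|_{\ell,K}$, which completes the proof.

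The remaining task is the $L^2$-stability of the interpolant. As $I_h\delta\in V_k(K)$ carries precisely the DoF vector $\vec\chi(\delta)$, I would first prove the norm equivalence $\|\phi\|_{0,K}\eqsim h_K\|\vec\chi(\phi)\|_{\ell^2}$ on $V_k(K)$: the bound $\|\phi\|_{0,K}\lesssim h_K\|\vec\chi(\phi)\|_{\ell^2}$ comes from a scaling argument over the virtual triangulation $\mathcal{T}_K$ of Assumption \textbf{A1} (the point-value DoFs are scale invariant and the moment DoFs are pre-scaled by $|K|^{-1}$, so only shape regularity and quasi-uniformity of $\mathcal{T}_K$ are used), while the reverse bound uses the trace inequality \eqref{trace}, a one-dimensional inverse estimate on the edge polynomials and \eqref{inv} to control the point-value DoFs, together with Cauchy--Schwarz and $\|m_K\|_{0,K}\lesssim h_K$ for the moment DoFs. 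For the smooth remainder $\delta$ one then bounds $\|\vec\chi(\delta)\|_{\ell^2}$ directly: the vertex and edge point values by the scaled embedding $\|\delta\|_{L^\infty(K)}\lesssim h_K^{-1}\|\delta\|_{0,K}+|\delta|_{1,K}+h_K|\delta|_{2,K}$, and the moments by $|K|^{-1}|(m_K,\delta)_K|\lesssim h_K^{-1}\|\delta\|_{0,K}$; multiplying by $h_K$ gives the stated stability estimate.

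The main obstacle is this last step, namely the DoF--$L^2$ norm equivalence on $V_k(K)$: because $K$ is a general polygon there is no single reference configuration to scale to, and this is exactly where Assumption \textbf{A1} and the inverse estimates of Lemma \ref{lem:Pih} do the real work. The scaled Sobolev embedding used for the point-value DoFs of $\delta$ is a minor additional technical point, and everything else is routine Bramble--Hilbert and Dupont--Scott bookkeeping.
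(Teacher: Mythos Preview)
Your proposal is correct and follows exactly the standard Bramble--Hilbert-plus-stability route used for virtual element interpolation estimates; the paper itself gives no proof here at all, simply citing Lemma~3.11 of \cite{ZMZW2023IPVEM}, and your outline is precisely the argument one finds there. The one place where your sketch is looser than it should be is the ``scaling argument over the virtual triangulation'' for the bound $\|\phi\|_{0,K}\lesssim h_K\|\vec\chi(\phi)\|_{\ell^2}$: since $V_k(K)$ depends on the polygon $K$ and the functions are only implicitly defined through the biharmonic constraint, a pure scaling argument does not suffice, and the actual proof in \cite{ZMZW2023IPVEM} (and in \cite{Chen-HuangJ-2018}) goes through an energy/lifting construction or the minimum-energy characterization of the virtual functions. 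You correctly flag this as the main obstacle, and once that norm equivalence is granted, everything else in your argument (polynomial reproduction via exactness of $Q_{2k-1}^e$, the inverse estimate \eqref{inv}, and the scaled Sobolev embedding for the point DoFs of $\delta$) is routine and accurate.
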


Let $u^0$ be the solution of the following boundary value problem:
\begin{equation}\label{bdp}
	\begin{cases}
		-\Delta u^0 = f & \quad {\rm in} \quad \Omega,\\
		u^0=0 &  \quad {\rm on} \quad \partial \Omega.
	\end{cases}
\end{equation}
The following regularity is well-known and can be found in \cite{NTW01} for instance.
\begin{lemma}\label{lem:regular}
	If $\Omega$ is a bounded convex polygonal domain and $f\in L^2(\Omega)$, then
	\[
	|u|_2+\varepsilon|u|_3\lesssim \varepsilon^{-1/2}\|f\|\qquad \text{and}\qquad |u-u^0|_1\lesssim \varepsilon^{1/2}\|f\|.
	\]
\end{lemma}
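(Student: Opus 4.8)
The strategy is to combine the basic energy estimate with a boundary-layer corrector and the second-order elliptic regularity available on convex polygonal domains. Testing \eqref{weakform} with $v=u$ and using the Poincar\'e inequality gives $\varepsilon^2|u|_2^2+|u|_1^2=(f,u)\lesssim\|f\|\,|u|_1$, hence $|u|_1\lesssim\|f\|$ and the crude, non-robust bound $\varepsilon|u|_2\lesssim\|f\|$. To upgrade the $H^2$-estimate I would subtract the reduced solution $u^0$ of \eqref{bdp}: since $(\nabla u^0,\nabla v)=(f,v)$ for all $v\in H_0^1(\Omega)$, the difference $w:=u-u^0$ satisfies $\varepsilon^2 a(u,v)+b(w,v)=0$ for every $v\in H_0^2(\Omega)$, together with $w|_{\partial\Omega}=0$ and $\partial_{\bm n}w|_{\partial\Omega}=-\partial_{\bm n}u^0$.

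Next I would construct an explicit layer lift $\zeta\in H^2(\Omega)$ with $\zeta|_{\partial\Omega}=0$ and $\partial_{\bm n}\zeta|_{\partial\Omega}=-\partial_{\bm n}u^0$, of the form $\zeta\approx\varepsilon\,g(s)\,\phi(t/\varepsilon)$ in boundary-fitted coordinates $(s,t)$ near $\partial\Omega$, where $t$ denotes the distance to $\partial\Omega$, $g$ is an $\varepsilon$-scale mollification of $\partial_{\bm n}u^0$, $\phi(\tau)=\tau e^{-\tau}$, and $\zeta$ is cut off and extended by zero inside $\Omega$. A direct computation, using the convex-domain regularity $\|u^0\|_{2,\Omega}\lesssim\|f\|$ and the trace estimate $\|\partial_{\bm n}u^0\|_{H^{1/2}(\partial\Omega)}\lesssim\|u^0\|_{2,\Omega}$, gives $|\zeta|_1\lesssim\varepsilon^{1/2}\|f\|$ and $|\zeta|_2\lesssim\varepsilon^{-1/2}\|f\|$. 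Since $w_0:=u-u^0-\zeta\in H_0^2(\Omega)$, testing the identity above with $v=w_0$ yields $\varepsilon^2|w_0|_2^2+|w_0|_1^2=-\varepsilon^2 a(u^0+\zeta,w_0)-(\nabla\zeta,\nabla w_0)$, and Young's inequality together with $\varepsilon\le1$ gives $\varepsilon^2|w_0|_2^2+|w_0|_1^2\lesssim\varepsilon^2(|u^0|_2^2+|\zeta|_2^2)+|\zeta|_1^2\lesssim\varepsilon\|f\|^2$. Hence $|w_0|_2\lesssim\varepsilon^{-1/2}\|f\|$ and $|w_0|_1\lesssim\varepsilon^{1/2}\|f\|$, and combining with the bounds on $u^0$ and $\zeta$ proves $|u|_2\lesssim\varepsilon^{-1/2}\|f\|$ and $|u-u^0|_1\le|w_0|_1+|\zeta|_1\lesssim\varepsilon^{1/2}\|f\|$.

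For the $H^3$-bound I would rewrite the equation as $\varepsilon^2\Delta^2 u=\Delta u-\Delta u^0=\Delta w$, with $u=\partial_{\bm n}u=0$ on $\partial\Omega$, and invoke the $H^3$-regularity of the clamped biharmonic operator on convex polygonal domains (cf.\ \cite{NTW01}) in the scaled form $|u|_3\lesssim\varepsilon^{-2}\|\varepsilon^2\Delta^2 u\|_{H^{-1}(\Omega)}$. Since $\|\Delta w\|_{H^{-1}(\Omega)}=\sup_{\phi\in H_0^1(\Omega)\backslash\{0\}}\frac{|(\nabla w,\nabla\phi)|}{|\phi|_1}\le|w|_1\lesssim\varepsilon^{1/2}\|f\|$ by the estimate just established, this gives $|u|_3\lesssim\varepsilon^{-3/2}\|f\|$, that is, $\varepsilon|u|_3\lesssim\varepsilon^{-1/2}\|f\|$, completing the proof. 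I expect the delicate points to be (i) constructing $\zeta$ with the \emph{sharp} $\varepsilon$-powers while handling the limited regularity of the trace $\partial_{\bm n}u^0$ near the corners of $\partial\Omega$, and (ii) establishing the $\varepsilon$-explicit $H^3$-regularity of the clamped biharmonic problem on the polygon---both being precisely where the convexity of $\Omega$ enters---so that in practice one may simply quote the regularity estimates of \cite{NTW01}.
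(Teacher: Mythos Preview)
The paper does not prove this lemma at all; it simply states the estimates as ``well-known'' and cites \cite{NTW01}. So there is no paper proof to compare against---only the question of whether your sketch is sound.

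Your outline is essentially the standard route taken in the cited reference and related works. The energy identity for $w_0=u-u^0-\zeta\in H_0^2(\Omega)$, together with the layer scalings $|\zeta|_1\lesssim\varepsilon^{1/2}\|f\|$ and $|\zeta|_2\lesssim\varepsilon^{-1/2}\|f\|$, correctly delivers $|u|_2\lesssim\varepsilon^{-1/2}\|f\|$ and $|u-u^0|_1\lesssim\varepsilon^{1/2}\|f\|$. For the $H^3$ bound, rewriting $\Delta^2 u=\varepsilon^{-2}\Delta(u-u^0)$ and feeding $\|\Delta(u-u^0)\|_{H^{-1}}\le|u-u^0|_1\lesssim\varepsilon^{1/2}\|f\|$ into the clamped biharmonic $H^3$-shift on convex polygons is also the right idea. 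You have correctly flagged the two places where real work hides: (i) building $\zeta$ with the \emph{sharp} $\varepsilon$-powers on a polygon, where $\partial_{\bm n}u^0$ is only in $H^{1/2}(\partial\Omega)$ and the corners force a careful cut-off/mollification (a naive edge-by-edge construction without smoothing would not give $\zeta\in H^2(\Omega)$); and (ii) the biharmonic $H^{-1}\!\to\!H^3$ regularity on convex polygons, which is true but is a nontrivial corner-singularity result (Blum--Rannacher/Grisvard) rather than an elementary consequence of convexity. Since you already anticipate citing \cite{NTW01} for these ingredients, your proposal is consistent with what the paper does---namely, defer the proof to that reference.
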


 Now we consider the optimal and uniform error estimate. According to Lemma \ref{lem:StrangIPVEM}, it suffices to estimate the right-hand side of \eqref{Strangerr}. 
 
 { We first examine the interpolation and projection errors. In our previous work \cite{FY24}, we followed the proof in \cite[Theorem 4.5]{Zhao-Chen-2016}, where the approximation errors are expressed in terms of the mesh size $h$ and yield $\mathcal{O}(h^{1/2})$ under the $H^2$-regularity of $u^0$. However, by measuring the errors with respect to $\varepsilon$ (which is relevant for the small $\varepsilon$ regime) and assuming higher regularity of $u^0$, we obtain optimal approximation errors.}

\begin{lemma} \label{lem:approximation}
Suppose that $\Omega$ is a bounded convex polygonal domain and that $u^0\in H^m(\Omega)$ with $2\leq m\leq  k+1$.  Under the condition of Lemma \ref{lem:StrangIPVEM}, there hold
\[\|u-I_hu\|_{\varepsilon,h} + \| u-u_{\pi} \|_{\varepsilon,h} \lesssim \varepsilon^{1/2}\|f\|+h^{m-1}|u^0|_m,\quad\|f-f_h\|_{V_h'} \lesssim h^k |f |_{k-1}.\]
\end{lemma}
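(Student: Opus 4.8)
We plan to bound each of the first three terms on the right of \eqref{Strangerr} in turn (the consistency term $E_h(u)$ is not part of this lemma). Throughout, write $w:=u-u^0$ for the boundary-layer remainder. Lemma~\ref{lem:regular} gives $|w|_1=|u-u^0|_1\lesssim\varepsilon^{1/2}\|f\|$ and $|u|_2\lesssim\varepsilon^{-1/2}\|f\|$; since $\Omega$ is convex, elliptic regularity for \eqref{bdp} yields $|u^0|_2\lesssim\|f\|$, whence $|w|_2\le|u|_2+|u^0|_2\lesssim\varepsilon^{-1/2}\|f\|$. Recalling $\|v\|_{\varepsilon,h}^2=\varepsilon^2|v|_{2,h}^2+\varepsilon^2J_1(v,v)+|v|_{1,h}^2$, we estimate the $\varepsilon^2$-weighted part and the $H^1$-seminorm part of $\|u-I_hu\|_{\varepsilon,h}$ and $\|u-u_\pi\|_{\varepsilon,h}$ separately.

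For the $\varepsilon^2$-weighted part we do not split off $u^0$. Lemma~\ref{lem:interpIPVEM} with $\ell=2$ and \eqref{BHe1} with $\ell=2$ give $|u-I_hu|_{2,h}+|u-u_\pi|_{2,h}\lesssim|u|_2\lesssim\varepsilon^{-1/2}\|f\|$, so these contribute $\lesssim\varepsilon^{1/2}\|f\|$ after multiplication by $\varepsilon$. For the jump terms we use that $\partial_{\bm{n}_e}u$ is single-valued across interior edges (since $u\in H_0^2(\Omega)$), so that $[\partial_{\bm{n}_e}\Pi_h^\nabla(u-I_hu)]=[\partial_{\bm{n}_e}(\Pi_h^\nabla(u-I_hu)-u)]$; inserting $\Pi_K^\nabla(I_hu)-I_hu$ and $I_hu-u$ and bounding each piece on $e$ by the trace estimate \eqref{trace}, the stability \eqref{Pi} and approximation \eqref{Proj} of $\Pi_K^\nabla$, and the interpolation bound \eqref{interp}, we obtain $\|[\partial_{\bm{n}_e}\Pi_h^\nabla(u-I_hu)]\|_{0,e}\lesssim h_K^{1/2}|u|_{2,K}$ for each $K\supset e$, hence $\varepsilon^2J_1(u-I_hu,u-I_hu)\lesssim\varepsilon^2|u|_2^2\lesssim\varepsilon\|f\|^2$; the same argument (now using that $\Pi_K^\nabla$ reproduces $\mathbb{P}_k(K)$ together with \eqref{BHe1}) controls $\varepsilon^2J_1(u-u_\pi,u-u_\pi)$.

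The $H^1$-seminorm part is the heart of the matter, and here we split $u=u^0+w$. Choosing $u_\pi$ to be an elementwise degree-$(m-1)$ approximation of $u^0$ (and zero on the layer part), and using $I_hu^0$, Lemma~\ref{lem:interpIPVEM}/\eqref{BHe1} give $|u^0-I_hu^0|_{1,h}+|u^0-u_\pi|_{1,h}\lesssim h^{m-1}|u^0|_m$. The remainder contributes $|w|_{1,h}=|u-u^0|_1\lesssim\varepsilon^{1/2}\|f\|$ to $\|u-u_\pi\|_{\varepsilon,h}$, and $|w-I_hw|_{1,h}$ to $\|u-I_hu\|_{\varepsilon,h}$. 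For the latter the naive bound $|w-I_hw|_{1,h}\lesssim h|w|_{2,h}\lesssim h\varepsilon^{-1/2}\|f\|$ is only $\mathcal{O}(\varepsilon^{1/2}\|f\|)$ when $h\lesssim\varepsilon$, so for $h\gtrsim\varepsilon$ we instead invoke the stability (not just the approximation) of the interpolation operator, i.e.\ $|w-I_hw|_{1,K}\lesssim|w|_{1,K}$, together with $|w|_1\lesssim\varepsilon^{1/2}\|f\|$, so that no factor $h/\varepsilon$ is lost. Collecting the three pieces gives $\|u-I_hu\|_{\varepsilon,h}+\|u-u_\pi\|_{\varepsilon,h}\lesssim\varepsilon^{1/2}\|f\|+h^{m-1}|u^0|_m$; the $\varepsilon$-versus-$h$ bookkeeping for $m>2$ is slightly more delicate but follows the same pattern.

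For the last estimate, since $\langle f_h,v_h\rangle=\sum_K(f,\Pi_{0,K}^kv_h)_K$ we have $\langle f-f_h,v_h\rangle=\sum_K(f,v_h-\Pi_{0,K}^kv_h)_K=\sum_K(f-\Pi_{0,K}^{k-1}f,\,v_h-\Pi_{0,K}^kv_h)_K$, using that $v_h-\Pi_{0,K}^kv_h$ is $L^2(K)$-orthogonal to $\mathbb{P}_{k-1}(K)$. With $\|f-\Pi_{0,K}^{k-1}f\|_{0,K}\lesssim h_K^{k-1}|f|_{k-1,K}$ and $\|v_h-\Pi_{0,K}^kv_h\|_{0,K}\le\|v_h-\Pi_{0,K}^0v_h\|_{0,K}\lesssim h_K|v_h|_{1,K}$ (assumption~{\bf A1}), Cauchy--Schwarz gives $|\langle f-f_h,v_h\rangle|\lesssim h^k|f|_{k-1}\,|v_h|_{1,h}\le h^k|f|_{k-1}\,\|v_h\|_{\varepsilon,h}$, i.e.\ $\|f-f_h\|_{V_h'}\lesssim h^k|f|_{k-1}$. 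The main obstacle is the third step: the \emph{uniform} bound $|w-I_hw|_{1,h}\lesssim\varepsilon^{1/2}\|f\|$ for the interpolation error of the layer remainder. This is exactly where the earlier analyses lost half an order, since $h|w|_{2,h}\lesssim h\varepsilon^{-1/2}\|f\|=h^{1/2}(h/\varepsilon)^{1/2}\|f\|$ degenerates to $h^{1/2}\|f\|$ when $h\sim\varepsilon$; recovering the optimal rate hinges on pairing the a priori bound $|u-u^0|_1\lesssim\varepsilon^{1/2}\|f\|$ of Lemma~\ref{lem:regular} with the stability of the interpolant rather than only its approximation property.
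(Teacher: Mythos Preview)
Your approach is essentially the same as the paper's. Both arguments treat the $\varepsilon^2$-weighted $H^2$ and $J_1$ parts by the crude bound $\lesssim|u|_2\lesssim\varepsilon^{-1/2}\|f\|$, split $u=u^0+(u-u^0)$ only in the $H^1$-seminorm, and then bound the layer contribution by the $H^1$-\emph{stability} of the interpolant, i.e.\ $|w-I_Kw|_{1,K}\lesssim|w|_{1,K}$ with $w=u-u^0$. The paper does this in one line (writing $|u-u^0-I_K(u-u^0)|_{1,K}^2\lesssim|u-u^0|_{1,K}^2$ and citing \eqref{interp}); you are more explicit that this step is stability rather than approximation, which is indeed the crux.

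Two small points. First, your description of $u_\pi$ is internally inconsistent: in the $\varepsilon^2$-weighted paragraph you use $|u-u_\pi|_{2,h}\lesssim|u|_2$, which needs $u_\pi$ to approximate $u$, whereas in the $H^1$ paragraph you take $u_\pi$ to approximate $u^0$ only. The clean fix is to let $u_\pi=P_hu$ with $P_h$ a \emph{linear} elementwise projector onto $\mathbb{P}_{m-1}$; then $|u-P_hu|_{2,h}\lesssim|u|_2$ by stability, while linearity lets you split $u-P_hu=(u^0-P_hu^0)+(w-P_hw)$ for the $H^1$ part, exactly as for $I_h$. No case distinction in $h$ versus $\varepsilon$ is needed anywhere; the paper uses stability directly, and your remark that ``the $\varepsilon$-versus-$h$ bookkeeping for $m>2$ is slightly more delicate'' is unnecessary. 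Second, in the $\langle f-f_h,v_h\rangle$ estimate the paper inserts $\Pi_{0,K}^{k-2}f$ rather than your $\Pi_{0,K}^{k-1}f$; both give the same bound $h^k|f|_{k-1}$.
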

 \begin{proof}
 For the first term $\|u-I_hu\|_{\varepsilon,h}$, we write it explicitly as
\begin{equation*}\label{th1}
	\|u-I_hu\|_{\varepsilon,h}^2=\varepsilon^2|u-I_hu|_{2,h}^2+\varepsilon^2J_1(u-I_hu,u-I_hu)+|u-I_hu|_{1,h}^2.
\end{equation*}
By Lemmas \ref{lem:interpIPVEM} and \ref{lem:regular}, we derive
 \begin{align*}
	\varepsilon^2|u-I_hu|_{2,h}^2\leq\varepsilon^2\sum_{K \in \mathcal{T}_h}|u-I_Ku|_{2,K}^2\lesssim \varepsilon^2 |u|_2^2 \lesssim \varepsilon\|f\|^2.
 \end{align*}
Then, apply the trace inequality, the boundedness of $\Pi_K^{\nabla}$ in Lemma \ref{lem:Pih}, the interpolation error estimate in \eqref{interp} and the regularity estimate in Lemma \ref{lem:regular} to get
\begin{align*}
	\varepsilon^2J_1(u-I_hu,u-I_hu)&=\varepsilon^2\sum_{e\in\mathcal{E}_h}\int_e\frac{\lambda_e}{|e|}\Big[\frac{\partial \Pi_h^{\nabla}(u-I_hu)}{\partial \bm n_e}\Big]\Big[\frac{\partial \Pi_h^{\nabla}(u-I_hu)}{\partial \bm n_e}\Big]\mathrm{d} s\notag\\
	&\leq \varepsilon^2\sum_{{K}\in\mathcal{T}_h}\Big(|\Pi_h^{\nabla}(u-I_hu)|_{2,K}^2+h_K^{-2}|\Pi_h^{\nabla}(u-I_hu)|_{1,K}^2  \Big)\\
    &\lesssim \varepsilon^2 |u|_2^2 \lesssim \varepsilon\|f\|^2.
\end{align*}	
From Lemma \ref{lem:regular} and \eqref{interp}, we have
\begin{align*}\label{th2}
	|u-I_hu|_{1,h}^2
& = \sum_{K \in \mathcal{T}_h}|u-I_Ku|_{1,K}^2
  = \sum_{K \in \mathcal{T}_h}|u-u_0 + u_0 - I_K u_0 + I_K u_0 - I_Ku|_{1,K}^2\\
& \lesssim\sum_{K \in \mathcal{T}_h}\Big(|u-u^0-I_K(u-u^0)|_{1,K}^2
	+|u^0-I_Ku^0|_{1,K}^2\Big)\notag\\
& \lesssim \sum_{K \in \mathcal{T}_h}\Big(|u-u^0|_{1,K}^2+h_K^{2m-2}|u^0|_{m,K}^2\Big)\\
 &\lesssim |u-u_0|_1^2+h^{2m-2}|u^0|_m^2\\
 &\lesssim \varepsilon\|f\|^2+h^{2m-2}|u^0|_m^2.
\end{align*}
{ As observed, we measure the approximation errors in terms of $\varepsilon$ and assume higher regularity of $u^0$.}
Combining the above inequalities, we derive
\begin{equation}\label{opt:inter}
	\|u-I_hu\|_{\varepsilon,h}\lesssim \varepsilon^{1/2}\|f\|+h^{m-1}|u^0|_m.
\end{equation}
Similarly, we can bound the second term as $\|u-u_{\pi}\|_{\varepsilon,h}\lesssim \varepsilon^{1/2}\|f\|+h^{m-1}|u^0|_m.$
For the third term, from
\begin{align}\label{rhs}
\langle f-f_h, v_h \rangle
& = \sum_{K \in \mathcal{T}_h} (f, v_h-\Pi_{0,K}^k v_h )_K\notag\\
&=\sum_{K \in \mathcal{T}_h} (f-\Pi_{0,K}^{k-2}f, v_h-\Pi_{0,K}^k v_h )_K \nonumber\\
&  \lesssim \sum_{K \in \mathcal{T}_h} h_K^{k-1} |f |_{k-1,K} h_K | v_h |_{1,K}\notag \\
&\lesssim h^k |f |_{k-1} \| v_h \|_{\varepsilon,h},
\end{align}
we get $\|f-f_h\|_{V_h'} \lesssim h^k |f |_{k-1}$. The proof is completed.
 \end{proof}


The remaining part is to estimate the consistency term $E_h(u)$.
By Lemma \ref{lem:StrangIPVEM}, we need to consider the estimates of $E_{A1}(u,v_h)$, $E_{A2}(u,v_h)$, $E_{A3}(u,v_h)$,
$\varepsilon^2J_1(I_hu,v_h)$ and $\varepsilon^2J_3(I_hu,v_h)$. { Following the same spirit as for the approximation errors, we examine the consistency error in terms of the perturbation parameter $\varepsilon$, which yields its optimal convergence when $\varepsilon \lesssim h$.}	

\begin{lemma}\label{lem:consistency}
Suppose that $\Omega$ is a bounded convex polygonal domain. Under the condition of Lemma \ref{lem:StrangIPVEM}, there holds
$ E_h(u) \lesssim \varepsilon^{1/2}\|f\|+{h^{k} |f|_{k-1}  |v_h|_{1,h}}$.
\end{lemma}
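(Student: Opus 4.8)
The plan is to bound the numerator $E_A(u,v_h)+E_J(u,v_h)$ of \eqref{consistencyterm} by $\varepsilon^{1/2}\|f\|\,\|v_h\|_{\varepsilon,h}$, treating the five pieces $E_{A1},E_{A2},E_{A3},\varepsilon^2J_1(I_hu,v_h),\varepsilon^2J_3(I_hu,v_h)$ one at a time. Three inputs are used throughout: the regularity bounds $|u|_2+\varepsilon|u|_3\lesssim\varepsilon^{-1/2}\|f\|$ and $|u-u^0|_1\lesssim\varepsilon^{1/2}\|f\|$ from Lemma \ref{lem:regular}; the trace inequalities of Lemma \ref{lem:trace} together with the approximation property \eqref{BHe1}, the interpolation estimate \eqref{interp} and the projection estimates \eqref{inv}--\eqref{Proj}; and the elementary consequences of the norm definitions $\varepsilon|w|_{2,h}\le\varepsilon\|w\|_h\le\|w\|_{\varepsilon,h}$, $|w|_{1,h}\le\|w\|_{\varepsilon,h}$, $J_1(w,w)^{1/2}\le\|w\|_h$. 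The organising principle is that a factor $\varepsilon^2$ compensates both the loss $|u|_3\lesssim\varepsilon^{-3/2}\|f\|$ coming from a third derivative of $u$ and the loss $\varepsilon^{-1}$ coming from bounding a test‑function jump by $\|v_h\|_h$, so that each of the four $\varepsilon^2$‑pieces lands at the $\varepsilon^{1/2}\|f\|$ level; only the part of $E_{A3}$ without an $\varepsilon^2$ in front has nothing to spare, and there one has to invoke the reduced problem \eqref{bdp}.

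For the four $\varepsilon^2$‑pieces I would split each edge integrand by Cauchy--Schwarz on $e$. The $v_h$‑dependent factor is a jump or average of a first/second normal or tangential derivative of $v_h$; writing $v_h=\Pi_h^\nabla v_h+(v_h-\Pi_h^\nabla v_h)$, the jump of $\partial_{\bm n_e}\Pi_h^\nabla v_h$ is, weighted by $|e|/\lambda_e$, exactly a $J_1$‑contribution, while the remainder is $\lesssim h_K^{1/2}|v_h|_{2,K}$ by \eqref{trace} and \eqref{Proj}; after summation the whole $v_h$‑factor is $\lesssim\|v_h\|_h$ (for $E_{A2}$ one first writes $[\partial_{\bm t_e}v_h]=[\partial_{\bm t_e}(v_h-I_h^cv_h)]$, small because $I_h^cv_h\in H_0^1(\Omega)$ is conforming). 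The $u$‑dependent factor is a second derivative of $u$, of $u-\Pi_h^\nabla I_hu$, or of $I_hu$; here I would keep only the lowest admissible Sobolev index in \eqref{BHe1}, \eqref{interp}, \eqref{Pi}, routing the genuinely higher derivatives onto $u^0$ (whose norms are $\varepsilon$‑independent and appear with positive powers of $h$, giving the optimal‑order part) and bounding the $u-u^0$ part through $|u-u^0|_1$ and $|u|_2\lesssim\varepsilon^{-1/2}\|f\|$; additionally, the moment matching $(m_e,\partial_{\bm n_e}v_h)_e=(m_e,\partial_{\bm n_e}\Pi_h^\nabla v_h)_e$ for $m_e\in\mathbb M_{k-1}(e)$ lets me replace the $u$‑factor by its best edge polynomial residual so that it scales optimally. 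The net bound of each such piece is of the form $\varepsilon^2\cdot\varepsilon^{-1/2}\|f\|\cdot\varepsilon^{-1}\|v_h\|_{\varepsilon,h}=\varepsilon^{1/2}\|f\|\,\|v_h\|_{\varepsilon,h}$, the $\varepsilon^2\varepsilon^{-3/2}$‑type leftovers being at most of the same order; the terms $\varepsilon^2J_1(I_hu,v_h)$, $\varepsilon^2J_3(I_hu,v_h)$ are handled identically, the $I_hu$‑factor being controlled by its approximation property via \eqref{interp} and \eqref{Pi}.

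The decisive term is $E_{A3}$. Since $u\in H^{k+1}(\Omega)$ yields $\Delta u\in H^1(\Omega)$ and hence, from the equation, $\Delta^2u=\varepsilon^{-2}(f+\Delta u)\in L^2(\Omega)$, one has the identity $(\nabla u,\nabla w)=(f,w)+\varepsilon^2(\nabla\Delta u,\nabla w)$ for every $w\in H_0^1(\Omega)$; applying it with $w=I_h^cv_h$ (legitimate because $I_h^cv_h$ is conforming and vanishes on $\partial\Omega$) collapses $E_{A3}$ to
\[
E_{A3}(u,v_h)=\varepsilon^2\sum_{K\in\mathcal T_h}(\nabla\Delta u,\nabla(I_h^cv_h-v_h))_K+(f,I_h^cv_h-v_h).
\]
The first sum is $\lesssim\varepsilon^2|u|_3\,|I_h^cv_h-v_h|_{1,h}\lesssim\varepsilon^2\cdot\varepsilon^{-3/2}\|f\|\cdot|v_h|_{1,h}\le\varepsilon^{1/2}\|f\|\,\|v_h\|_{\varepsilon,h}$ by Lemma \ref{lem:regular} and the $H^1$‑stability $|I_h^cv_h|_{1,h}\lesssim|v_h|_{1,h}$ of the (DoF‑preserving) interpolant. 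The second term is treated exactly as the data term \eqref{rhs}: since $v_h$ and $I_h^cv_h$ carry the same moments against $\mathbb P_{k-2}(K)$, $(f,I_h^cv_h-v_h)_K=(f-\Pi_{0,K}^{k-2}f,I_h^cv_h-v_h)_K$, and together with $\|I_h^cv_h-v_h\|_{0,K}\lesssim h_K|v_h|_{1,K}$ this is an $\mathcal O(h^k)$ contribution. Collecting all pieces gives $E_h(u)\lesssim\varepsilon^{1/2}\|f\|$.

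I expect the main obstacle to be the uniformity in $\varepsilon$ of the $\varepsilon^2$‑edge terms $E_{A1}$ and $\varepsilon^2J_1(I_hu,v_h)$: a crude estimate of the $u$‑factor that passes through $\|\partial^2_{\bm n_e}(u-\Pi_h^\nabla I_hu)\|_{0,e}\lesssim h_K^{-1/2}|u|_{2,K}+h_K^{1/2}|u|_{3,K}$ and then $|u|_3\lesssim\varepsilon^{-3/2}\|f\|$ only delivers $h\varepsilon^{-1/2}\|f\|$, which is \emph{not} uniform. The resolution is exactly the Nitsche‑type structure exploited above — splitting around $u^0$ so that the $\varepsilon^{-3/2}$ never multiplies a positive power of $h$, and using the moment cancellation in $J_1$ and in the jump $[\partial_{\bm n_e}v_h]$ so that the consistency errors only ever see best‑approximation residuals, which scale optimally. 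It is this bookkeeping, rather than any single inequality, that upgrades the earlier $\mathcal O(h^{1/2})$ estimate to the optimal uniform one.
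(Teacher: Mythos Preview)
Your overall decomposition into the five pieces and your treatment of $E_{A3}$, $\varepsilon^2 J_1(I_hu,v_h)$, and $\varepsilon^2 J_3(I_hu,v_h)$ are essentially the paper's arguments. The rewriting of $E_{A3}$ via the weak identity applied to $I_h^c v_h\in H_0^1(\Omega)$, and the subsequent bound $\varepsilon^2|u|_3\,|v_h|_{1,h}+h^k|f|_{k-1}|v_h|_{1,h}$, match \eqref{EA3} line by line. For $J_1$ and $J_3$ your counting $\varepsilon^2\cdot\varepsilon^{-1/2}\|f\|\cdot\varepsilon^{-1}\|v_h\|_{\varepsilon,h}=\varepsilon^{1/2}\|f\|\,\|v_h\|_{\varepsilon,h}$ is exactly what the paper obtains, since there the $u$-factor is controlled by $|u|_2$ and the $v_h$-factor by $\|v_h\|_h$.

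The gap is in $E_{A1}$ (and the same issue recurs in $E_{A2}$). You bound the test-function jump $[\partial_{\bm n_e}v_h]$ by something of order $h_e^{1/2}\|v_h\|_h$, which after $\|v_h\|_h\le\varepsilon^{-1}\|v_h\|_{\varepsilon,h}$ costs an $\varepsilon^{-1}$. On the other side, the trace of the second normal derivative unavoidably produces a contribution $h_K^{1/2}|u-\Pi_h^\nabla I_hu|_{3,K}$; the non-polynomial part $|u-u_\pi|_{3,K}$ can only be bounded by $|u|_{3,K}\lesssim\varepsilon^{-3/2}\|f\|$. The resulting term is $\varepsilon^2\cdot h\varepsilon^{-3/2}\|f\|\cdot\varepsilon^{-1}\|v_h\|_{\varepsilon,h}=h\varepsilon^{-1/2}\|f\|\,\|v_h\|_{\varepsilon,h}$, precisely the non-uniform piece you flag. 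Your proposed remedy---splitting $u=u^0+(u-u^0)$ and ``routing the higher derivatives onto $u^0$''---does not close this: Lemma~\ref{lem:regular} only controls $|u-u^0|_1$, and no bound on $|u-u^0|_2$ or $|u-u^0|_3$ is available, so the third-derivative trace of $u-u^0$ cannot be estimated. The moment-matching you invoke merely transfers $[\partial_{\bm n_e}v_h]$ to $[\partial_{\bm n_e}\Pi_h^\nabla v_h]$ modulo $\mathbb P_{k-1}(e)$-orthogonal pieces; it does not lower the Sobolev demand on $u$.

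The paper's resolution is different and simpler, and does \emph{not} use $u^0$ at all in this lemma. The key step you are missing is to estimate the $v_h$-factor not via $\|v_h\|_h$ but via $|v_h|_{1,h}$, using the discrete inverse inequality \eqref{inv}: from \eqref{trace} and \eqref{inv},
\[
\Big\|\Big[\frac{\partial v_h}{\partial\bm n_e}\Big]\Big\|_{0,e}\lesssim h_K^{-1/2}|v_h|_{1,K}+h_K^{1/2}|v_h|_{2,K}\lesssim h_K^{-1/2}|v_h|_{1,K}.
\]
Then the $u$-factor is pushed entirely to $|u|_3$ (via $|u-u_\pi|_{3,K}$, $h_K^{-1}|u-u_\pi|_{2,K}$, and $h_K^{-1}|u_\pi-I_hu|_{2,K}$, all $\lesssim|u|_{3,K}$), and one obtains directly
\[
E_{A1}(u,v_h)\lesssim \varepsilon^2|u|_3\,|v_h|_{1,h}\lesssim \varepsilon^{1/2}\|f\|\,\|v_h\|_{\varepsilon,h},
\]
uniformly, with no $h\varepsilon^{-1/2}$ leftover. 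The same device handles $E_{A2}$, where the paper additionally exploits $\int_e p\,[\partial_{\bm t_e}v_h]\,\d s=0$ for $p\in\mathbb P_{k-2}(e)$ (a consequence of the shared Gauss--Lobatto DoFs) to subtract $\Pi_{0,K}^{k-2}w$ before estimating. In short: spend the inverse inequality on the \emph{test} side to land on $|v_h|_{1,h}$, and the full $\varepsilon^2$ remains to absorb $|u|_3\lesssim\varepsilon^{-3/2}\|f\|$.
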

\begin{proof}
For $E_{A1}$, by the trace inequality, the inverse inequality of polynomials, the boundedness of $\Pi_h^{\nabla}$, \eqref{inv}, \eqref{interp} and \eqref{BHe1}, we have
\begin{align*}
& E_{A1}(u,v_h) \\
&=\varepsilon^2\sum_{e \in \mathcal{E}_h}\int_e  \Big\{\frac{\partial^2 (u-\Pi_h ^\nabla I_h u)}{\partial \boldsymbol{n}_e^2}\Big\} \left[\frac{\partial  v_h  }{\partial \boldsymbol{n}_e}\right]  \d s
 \lesssim \varepsilon^2\sum_{e \in \mathcal{E}_h}\Big\| \frac{\partial^2 (u-\Pi_h ^\nabla I_h u)}{\partial \boldsymbol{n}_e^2}\Big\|_{0,e}\Big\| \frac{\partial  v_h  }{\partial \boldsymbol{n}_e}\Big\|_{0,e} \\
& \lesssim  \varepsilon^2\sum_{e \in \mathcal{E}_h}\Big\| \frac{\partial^2 (u-\Pi_h ^\nabla I_h u)}{\partial \boldsymbol{n}_e^2}\Big\|_{0,e}
(h_K^{-1/2}|v_h|_{1,K} + h_K^{1/2} |v_h|_{2,K}) \\
& \lesssim \varepsilon^2\sum_{K \in \mathcal{T}_h}\Big(h_K^{1/2}|u-\Pi_h^{\nabla}I_hu|_{3,K}+h_K^{-1/2}|u-\Pi_h^{\nabla}I_hu|_{2,K} \Big){ (h_K^{-1/2}|v_h|_{1,K})}  \\
& \lesssim \varepsilon^2\sum_{K \in \mathcal{T}_h}\Big( |u-u_{\pi}|_{3,K}+|\Pi_h^{\nabla}(u_{\pi}-I_hu)|_{3,K}+h_K^{-1}|u-u_{\pi}|_{2,K}+h_K^{-1}|\Pi_h^{\nabla}(u_{\pi}-I_hu)|_{2,K}\Big)|v_h|_{1,K} \\
& \lesssim \varepsilon^2\sum_{K \in \mathcal{T}_h}\Big( |u-u_{\pi}|_{3,K}+h_K^{-1}|u_{\pi}-I_hu|_{2,K}+h_K^{-1}|u-u_{\pi}|_{2,K}+h_K^{-1}|u_{\pi}-I_hu|_{2,K}\Big)|v_h|_{1,K} \\
& \lesssim \varepsilon^2 \sum_{K \in \mathcal{T}_h}|u|_{3,K}|v_h|_{1,K}\lesssim \varepsilon^2|u|_3|v_h|_{1,h}\lesssim \varepsilon^{1/2}\|f\|   |v_h|_{1,h}.
\end{align*}	
For $E_{A2}$, let $w|_e:=\frac{\partial^2 u}{\partial \boldsymbol{n}_e \partial \boldsymbol{t}_e}$. Using the trace inequality, the inverse inequality, the projection error estimate and Lemma \ref{lem:regular}, we get
\begin{align*}
E_{A2}(u,v_h)
& = \varepsilon^2\sum_{e \in \mathcal{E}_h}\int_e \frac{\partial^2 u}{\partial \boldsymbol{n}_e \partial \boldsymbol{t}_e}\left[\frac{\partial v_h}{\partial \boldsymbol{t}_e}\right] \d s  \lesssim \varepsilon^2\sum_{e \in \mathcal{E}_h}\int_e(w-\Pi_{0,K}^{k-2}w)\left[\frac{\partial v_h}{\partial \boldsymbol{t}_e}\right]\d s\\
&\lesssim {\varepsilon^2} \sum_{e \in \mathcal{E}_h}\|w-\Pi_{0,K}^{k-2} w \|_e\Big\| \left[\frac{\partial v_h}{\partial \boldsymbol{t}_e}\right]\Big\|_e\notag\\
& \lesssim \varepsilon^2\sum_{K \in \mathcal{T}_h}|u|_{3,K}|v_h|_{1,K}\lesssim \varepsilon^{1/2}\|f\||v_h|_{1,h}.
\end{align*}
For $E_{A3}$, the integration by parts in \eqref{weakform} gives
\[-\varepsilon^2 (\nabla (\Delta u), \nabla \varphi)+  (\nabla u, \nabla \varphi) = (f,\varphi),\quad  \varphi \in C_0^\infty(\Omega),\]
which implies
\begin{align*}
	-\varepsilon^2\sum_{K\in \mathcal{T}_h}(\nabla(\Delta u),\nabla I_h^c v_h )_K+\sum_{K\in \mathcal{T}_h}(\nabla u,\nabla I_h^c v_h )_K = (f,I_h^c v_h),
\end{align*}
since $C_0^\infty(\Omega)$ is dense in $H_0^1(\Omega)$, $u\in H^3(\Omega)$ and $I_h^c v_h \in H_0^1(\Omega)$. Then we can write $E_{A3}$ as
\begin{equation*}\label{eq:EA3}
	E_{A3}(u,v_h) =\sum_{K\in \mathcal{T}_h}\varepsilon^2\big(\nabla \Delta u,\nabla( I_h^c v_h-v_h)\big)_K+(f,I_h^c v_h-v_h).
\end{equation*}
According to Lemma \ref{lem:regular} and the error estimate for interpolation operator $I_h^c$ \cite{FY24}, one gets
\begin{align}\label{EA3}
	E_{A3}(u,v_h)
	& \lesssim \sum_{K\in \mathcal{T}_h} ( \varepsilon^2 |u|_{3,K} |I_h^c v_h-v_h|_{1,K} + h_K^{k-1} |f|_{k-1,K} \|I_h^c v_h-v_h\|_{0,K} )\notag\\
    &\lesssim\varepsilon^{1/2}\|f\||v_h|_{1,h}+h^{k} |f|_{k-1}  |v_h|_{1,h}.
\end{align}

For $\varepsilon^2J_1(I_hu,v_h)$, we apply the trace inequality, the boundedness of $\Pi_h^{\nabla}$, \eqref{interp} and \eqref{BHe1} to get
\begin{align*}
 \varepsilon^2 J_1(I_h u,v_h)
& = \varepsilon^2 \sum_{e\in\mathcal{E}_h}\frac{\lambda_e}{|e|}\int_e\Big[\frac{\partial \Pi_h^{\nabla}I_hu }{\partial \bm n_e}\Big]	\Big[\frac{\partial \Pi_h^{\nabla}v_h}{\partial \bm n_e}\Big] \mathrm{d} s	= \varepsilon^2 \sum_{e\in\mathcal{E}_h}\frac{\lambda_e}{|e|}\int_e\Big[\frac{\partial (\Pi_h^{\nabla}I_hu-u)}{\partial \bm n_e}\Big]\Big[\frac{\partial \Pi_h^{\nabla}v_h}{\partial \bm n_e}\Big] \mathrm{d} s\notag\\
&\le \varepsilon^2 \Big(\sum_{e\in\mathcal{E}_h} \frac{\lambda_e}{|e|} \Big\| \Big[\frac{\partial (\Pi_h^{\nabla}I_hu-u)}{\partial \bm n_e}\Big] \Big\|_{0,e}^2\Big)^{1/2}J_1(v_h,v_h)^{1/2}\notag\\
& \lesssim \varepsilon\Big(\sum_{K\in\mathcal{T}_h}|u - I_hu|_{2,K}^2 + |u-u_\pi|_{2,K}^2 + h_K^{-2}(|u-I_hu|_{1,K}^2 + |u-u_\pi|_{1,K}^2 )\Big)^{1/2}
\| v_h \|_{\varepsilon,h}\notag\\
& \lesssim \varepsilon |u|_2 {\| v_h \|}_{\varepsilon,h}\lesssim \varepsilon^{1/2}\|f\|\| v_h \|_{\varepsilon,h}.
\end{align*}

For $\varepsilon^2J_3(I_hu,v_h)$, the continuity of $u$, the trace inequality, the inverse inequality for polynomials, the boundedness of $\Pi_h^{\nabla}$ in \eqref{Pi} and the Cauchy-Schwarz inequality give
\begin{align*}
\varepsilon^2J_3(I_hu,v_h)
&  = \varepsilon^2\sum_{{e\in\mathcal{E}_h}}\int_e\Big[\frac{\partial (\Pi_h^{\nabla}I_hu-u)}{\partial \bm n_e}\Big]\Big[\frac{\partial^2\Pi_h^{\nabla}v_h}{\partial \bm n_e^2}\Big]\mathrm{d} s\notag\\
&\lesssim \varepsilon^2\sum_{{e\in \mathcal{E}_h}}\Big\|\Big[\frac{\partial( \Pi_h^{\nabla}I_hu-u)}{\partial \bm n_e}\Big] \Big\|_{0,e}\Big\|\Big[\frac{\partial^2\Pi_h^{\nabla}v_h}{\partial \bm n_e^2}\Big] \Big\|_{0,e}\notag\\
& \lesssim \varepsilon^2 \sum_{K\in\mathcal{T}_h}\Big(h_K^{1/2}|\Pi_h^{\nabla}I_hu-u|_{2,K} + h_K^{-1/2}|\Pi_h^{\nabla}I_hu-u|_{1,K}\Big) \Big(h_K^{-1/2}|\Pi_h^{\nabla}v_h|_{2,K}+h_K^{1/2}|\Pi_h^{\nabla}v_h|_{3,K}\Big)\notag\\
&\lesssim \varepsilon^2 \sum_{K\in \mathcal{T}_h}\Big( |u-I_hu|_{2,K}+|u-u_{\pi}|_{2,k}+h_K^{-1}(|u-I_hu|_{1,k}+|u-u_{\pi}|_{1,K})\Big){|v_h|_{2,K}}\notag\\
&\lesssim \varepsilon|u|_{2}\|v_h\|_{\varepsilon,h}
 \lesssim \varepsilon^{1/2}\|f\|\| v_h \|_{\varepsilon,h}.
\end{align*}
The proof is completed.
\end{proof}


In summary of the above estimates, we conclude that

\begin{theorem}\label{thm:unif}
Let $u\in H_0^2(\Omega)\cap H^{k+1}(\Omega)$ be the solution to \eqref{weakform} with $k \ge 2$.  Suppose that $\Omega$ is a bounded convex polygonal domain and that $u^0\in H^m(\Omega)$ with $2\leq m\leq  k+1$. Then, it follows
\begin{equation}\label{result}
\|u-u_h\|_{\varepsilon,h}\lesssim \varepsilon^{1/2}\|f\|+h^{m-1}|u^0|_m+h^k|f|_{k-1}.
\end{equation}
\end{theorem}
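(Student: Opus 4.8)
The plan is simply to assemble the three preceding lemmas, since the asserted bound \eqref{result} is exactly their sum. First I would invoke the abstract Strang-type estimate of Lemma~\ref{lem:StrangIPVEM}: under the standing hypothesis $u\in H_0^2(\Omega)\cap H^{k+1}(\Omega)$ with $k\ge 2$, inequality \eqref{Strangerr} bounds $\|u-u_h\|_{\varepsilon,h}$ by four contributions — the two approximation terms $\|u-I_hu\|_{\varepsilon,h}$ and $\|u-u_\pi\|_{\varepsilon,h}$, the data term $\|f-f_h\|_{V_h'}$, and the consistency term $E_h(u)$.

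Next I would control the first three contributions via Lemma~\ref{lem:approximation}: using the convexity of $\Omega$ together with $u^0\in H^m(\Omega)$, $2\le m\le k+1$, this yields $\|u-I_hu\|_{\varepsilon,h}+\|u-u_\pi\|_{\varepsilon,h}\lesssim \varepsilon^{1/2}\|f\|+h^{m-1}|u^0|_m$ and $\|f-f_h\|_{V_h'}\lesssim h^k|f|_{k-1}$. Then I would bound the consistency term by Lemma~\ref{lem:consistency}, which gives $E_h(u)\lesssim \varepsilon^{1/2}\|f\|$ under the same convexity assumption. Substituting these estimates into \eqref{Strangerr} and merging the repeated $\varepsilon^{1/2}\|f\|$ contributions into a single term produces exactly \eqref{result}, which finishes the proof.

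I would note that the hypotheses of the theorem ($\Omega$ bounded convex polygonal, $u\in H_0^2(\Omega)\cap H^{k+1}(\Omega)$, $u^0\in H^m(\Omega)$ with $2\le m\le k+1$, $k\ge 2$) are precisely those needed for Lemmas~\ref{lem:StrangIPVEM}, \ref{lem:approximation} and \ref{lem:consistency} to hold simultaneously, so no additional argument or compatibility check is required at this stage. In this sense there is no genuine obstacle left in the proof of the theorem itself; the substantive work is already carried out inside Lemma~\ref{lem:consistency}, where the consistency defect is decomposed into the boundary terms $E_{A1},E_{A2},E_{A3}$ and the penalty terms $\varepsilon^2 J_1(I_hu,v_h)$, $\varepsilon^2 J_3(I_hu,v_h)$, and the regularity estimate $|u|_2+\varepsilon|u|_3\lesssim \varepsilon^{-1/2}\|f\|$ of Lemma~\ref{lem:regular} is used to trade one power of $\varepsilon$ against $\varepsilon^{-1/2}\|f\|$, producing the half power of $\varepsilon$ that renders the whole estimate uniform in the perturbation parameter.
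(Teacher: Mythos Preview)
Your proposal is correct and matches the paper's approach exactly: the paper simply writes ``In summary of the above estimates, we conclude that'' before stating the theorem, so the proof is nothing more than inserting the bounds of Lemmas~\ref{lem:approximation} and~\ref{lem:consistency} into the Strang-type estimate~\eqref{Strangerr} of Lemma~\ref{lem:StrangIPVEM}. One minor remark: the proof of Lemma~\ref{lem:consistency} (see the bound~\eqref{EA3} for $E_{A3}$) actually produces an additional $h^k|f|_{k-1}$ contribution beyond what the lemma's statement records, but this is harmless here since that term already appears via $\|f-f_h\|_{V_h'}$.
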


Compared to the result in \cite{FY24}, we obtain the second-order convergence rate in the lowest-order case ($k=2$) if $f\in H^{k-1}(\Omega)$, which is optimal, as $\varepsilon$ approaches zero.

\section{Numerical examples}

In this section, we report the performance with several examples by testing the accuracy and the robustness with respect to the singular parameter $\varepsilon$ to resolve solutions with boundary layer effects.
We have considered both the case of $ k = 2$ and the case of $k = 3$ and  the domain $\Omega$ is taken as the unit square $(0,1)^2$.

All examples are implemented in MATLAB R2019b. The code is available on GitHub (\url{https://github.com/Terenceyuyue/mVEM}) as part of the mVEM package, which includes efficient and easy-to-follow implementations for various VEMs from the literature.
The subroutine `Fourth\_order\_Singular\_Perturbation\_IPVEM.m' is used to compute the numerical solutions, while the test script `main\_Fourth\_order\_Singular\_Perturbation\_IPVEM.m' is used to verify the convergence rates.

\begin{example}\label{ex:1}
Let $\Omega=[0,1]\times [0,1]$.
The solution
with a layer is constructed such that { $u(x,y) =\sin^2(\pi x y)+ 3 x^2 y^2 $.}
\end{example}

Let $ u $ be the exact solution of \eqref{strongform}, and $ u_h $ the discrete solution obtained from the underlying VEM \eqref{IPVEM1}. Since the VEM solution is not explicitly available within the polygonal elements, we will assess the errors by comparing the exact solution $ u $ with the elliptic projections. In this manner, the discrete error, measured in terms of the discrete energy norm, is quantified by
\begin{equation}\label{errEval}
\text{Err} = \Big( \sum\limits_{K \in \mathcal{T}_h} (\varepsilon^2 |u -  \Pi_h^\Delta  u_h|_{2,K}^2 + |u - \Pi_h^\nabla u_h|_{1,K}^2 ) \Big)^{1/2}/\|f\|.
\end{equation}

To evaluate the accuracy of the proposed method, we analyze a series of meshes based on a Centroidal Voronoi Tessellation of the unit square, with polygonal counts of $N$= 32, 64, 128, 256, and 512. These meshes are generated using the MATLAB toolbox PolyMesher \cite{Talischi-Paulino-Pereira-2012}.
The convergence orders of the errors with respect to the mesh size $h$ for $k=2$ and { $k=3$} are presented in {Table \ref{tab:ex1} and Table \ref{tab:ex1_1}}, respectively.
{ It is evident that the VEM guarantees at least $k$-th order convergence for all $\varepsilon \in (0,1]$. As $\varepsilon$ approaches zero, a $(k+1)$-th order convergence rate is observed, which agrees with the theoretical prediction in Theorem \ref{thm:unif}. Furthermore, the error behavior remains stable in this regime.}

\begin{table}[!htb]
  \centering
  \caption{The convergence rate with $k=2$ for Example \ref{ex:1}}\label{tab:ex1}
  \begin{tabular}{ccccccccccccccccc}
  \toprule[0.2mm]
  $\varepsilon \backslash N$ & 32   &64   &128   &256   &512  & Rate\\
  \midrule[0.3mm]
  1e-0  & 1.6926e-02   & 1.1886e-02   & 7.7060e-03   & 6.0383e-03   & 3.8926e-03   & 1.04\\
  1e-1  & 6.0237e-02   & 3.8000e-02   & 2.4672e-02   & 1.9601e-02   & 1.3106e-02   & 1.07\\
  1e-2  & 1.4605e-02   & 8.4552e-03   & 3.7313e-03   & 2.3571e-03   & 1.4715e-03   & 1.69\\
  1e-3  & 1.2650e-02   & 6.8511e-03   & 2.7067e-03   & 1.3758e-03   & 6.9842e-04   & 2.13\\
  1e-4  & 1.2636e-02   & 6.8360e-03   & 2.6888e-03   & 1.3629e-03   & 6.8605e-04   & 2.15\\
  1e-5  & 1.2636e-02   & 6.8359e-03   & 2.6888e-03   & 1.3628e-03   & 6.8597e-04   & 2.15\\
  \bottomrule[0.2mm]
\end{tabular}
\end{table}

\begin{table}[!htb]
  \centering
  \caption{The convergence rate {with $k=3$} for Example \ref{ex:1}}\label{tab:ex1_1}
  \begin{tabular}{ccccccccccccccccc}
  \toprule[0.2mm]
  $\varepsilon \backslash N$ & 32   &64   &128   &256   &512  & Rate\\
  \midrule[0.3mm]
  1e-0   & 3.8262e-03   & 1.4450e-03   & 6.2458e-04   & 2.6427e-04   & 1.4084e-04   & 2.40\\
  1e-1   & 1.3452e-02   & 5.0626e-03   & 2.1361e-03   & 9.2470e-04   & 4.9631e-04   & 2.39 \\
  1e-2   & 3.0331e-03   & 1.2243e-03   & 3.7252e-04   & 1.5741e-04   & 7.5709e-05   & 2.72 \\
  1e-3   & 1.9226e-03   & 7.1015e-04   & 1.9430e-04   & 7.5048e-05   & 2.9407e-05   & 3.06 \\
  1e-4   & 1.9592e-03   & 7.1154e-04   & 1.8983e-04   & 6.8974e-05   & 2.5094e-05   & 3.19\\
  1e-5   & 1.9598e-03   & 7.1187e-04   & 1.8989e-04   & 6.8994e-05   & 2.5104e-05   & 3.19\\
  \bottomrule[0.2mm]
\end{tabular}
\end{table}

\begin{example}\label{ex:2}
In this example, we test the performance of the VEM to resolve a solution with
boundary layer effect. The solution is construct such that
\begin{align*}
u=\left(\exp (\sin \pi x)-1-\pi \varepsilon \frac{\cosh \frac{1}{2 \varepsilon}-\cosh \frac{2 x-1}{2 \varepsilon}}{\sinh \frac{1}{2 \varepsilon}}\right)
\times\left(\exp (\sin \pi y)-1-\pi \varepsilon \frac{\cosh \frac{1}{2 \varepsilon}-\cosh \frac{2 y-1}{2 \varepsilon}}{\sinh \frac{1}{2 \varepsilon}}\right).
\end{align*}
\end{example}

For this boundary layer problem, we focus on the behavior when $\varepsilon$ is small. {We evaluate
the performance on  a square domain}. Please refer to Fig. \ref{fig:example2} for a snapshot of the numerical and
exact solutions with $\varepsilon = 10^{-10}$. Additionally, we present the error and the corresponding convergence rate for $k=2$ and $k=3$  in Tables \ref{tab:ex2} and \ref{tab:ex2_1}, respectively. For sufficiently small $\varepsilon$, the observed convergence rates agree with the theoretical prediction in Theorem \ref{thm:unif}.

\begin{table}[!htb]
  \centering
  \caption{The convergence rate for Example \ref{ex:2} with $k=2$ on a square domain}\label{tab:ex2}
  \begin{tabular}{ccccccccccccccccc}
 \toprule[0.2mm]
  $\varepsilon \backslash N$ & 100   &200   &300   &400   &500  & Rate\\
  \midrule[0.3mm]
   1e-4  & 7.8922e-03   & 4.0052e-03   & 2.0022e-03   & 1.0141e-03   & 4.9884e-04   & 1.99\\
   1e-5 & 7.8921e-03   & 4.0052e-03   & 2.0022e-03   & 1.0140e-03   & 4.9877e-04   &1.99\\
   1e-6  & 7.8921e-03   & 4.0052e-03   & 2.0022e-03   & 1.0140e-03   & 4.9877e-04   & 1.99\\
   1e-7& 7.8921e-03   & 4.0052e-03   & 2.0022e-03   & 1.0140e-03   & 4.9877e-04   & 1.99\\

  \bottomrule[0.2mm]
\end{tabular}
\end{table}

\begin{table}[!htb]
  \centering
  \caption{The convergence rate for Example \ref{ex:2}with {$k=3$} on a square domain}\label{tab:ex2_1}
  \begin{tabular}{ccccccccccccccccc}
 \toprule[0.2mm]
  $\varepsilon \backslash N$ & 100   &200   &300   &400   &500  & Rate\\
  \midrule[0.3mm]
   1e-4  & 1.1235e-03   & 3.8181e-04   & 1.3050e-04   & 4.5905e-05   & 1.6040e-05   & 3.06\\
   1e-5 & 1.1238e-03   & 3.8195e-04   & 1.3056e-04   & 4.5928e-05   & 1.6044e-05   &3.06\\
   1e-6  & 1.1238e-03   & 3.8195e-04   & 1.3056e-04   & 4.5928e-05   & 1.6044e-05   & 3.06\\
   1e-7& 1.1238e-03   & 3.8195e-04   & 1.3056e-04   & 4.5928e-05   & 1.6044e-05   &3.06\\
  \bottomrule[0.2mm]
\end{tabular}
\end{table}

 \begin{figure}[!htb]
  \centering
  \includegraphics[scale=0.48,trim = 50 0 50 0,clip]{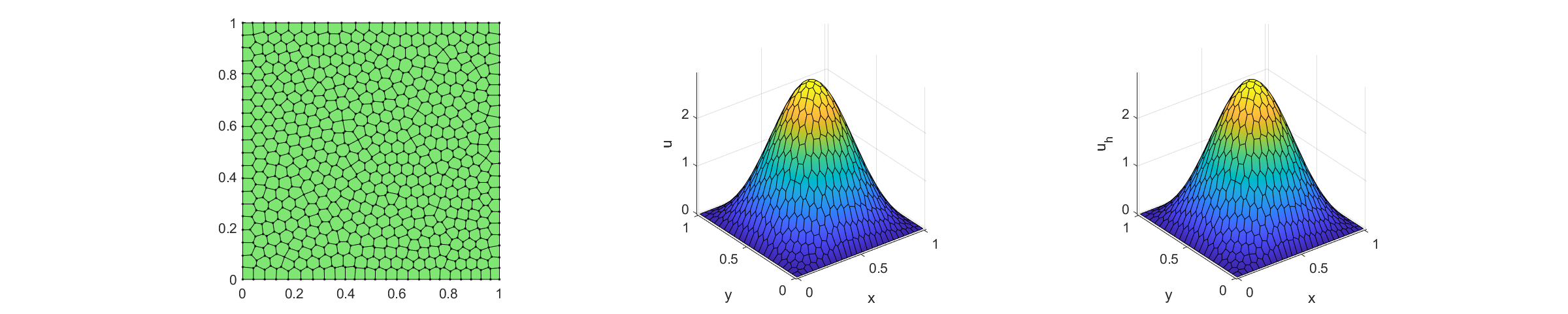}\\
  \caption{Numerical and exact solutions for Example \ref{ex:2} with $k=2$~($\varepsilon = 10^{-10}$) }\label{fig:example2}
\end{figure}

\section*{Acknowledgements}

FF was partially supported by the National Natural Science Foundation of China (NSFC) grant  12401528 and
the Fundamental Research Funds for the Central Universities, No. 30924010837.
YY was partially supported by NSFC grant (No.\ 12301561), the Key Project of Scientific Research Project of Hunan Provincial Department of Education (No.\ 24A0100), the Science and Technology Innovation Program of Hunan Province (No.\ 2025RC3150) and the general program of Hunan Provincial Natural Science Foundation (No.\ 2026JJ50003).
This research was supported in part by the 111 Project (No.\ D23017), and Program for Science and Technology Innovative Research Team in Higher Educational Institutions of Hunan Province of China.

\bibliographystyle{plain} 
\bibliography{Refs_IPVEM}

\end{document}